\renewcommand\theequation{\thesection.\arabic{equation}}
\newcommand{\GL}{{\mathrm{GL}}}
\newcommand{\Ind}{{\mathrm{Ind}}}
\renewcommand{\Re}{{\mathrm{Re}}}
\newcommand{\SL}{{\mathrm{SL}}}
\newcommand{\SO}{{\mathrm{SO}}}
\newcommand{\C}{{\mathbb{C}}}
\newcommand{\Q}{{\mathbb{Q}}}
\def\diag{{\rm diag}}
\newtheorem{thm}{Theorem}[section]
\newtheorem{cor}[thm]{Corollary}
\newtheorem{lem}[thm]{Lemma}
\newtheorem{prop}[thm]{Proposition}
\newtheorem {ques/conj}[thm]{Question/Conjecture}
\newtheorem{rmk}[thm]{Remark}
\newcommand{\Rmnum}[1]{\expandafter\@slowromancap\romannumeral #1@}
\begin{document}
\renewcommand{\theequation}{\arabic{equation}}
\numberwithin{equation}{section}

\title[The Langlands parameter: even orthogonal groups]{On The Langlands parameter of a Simple Supercuspidal Representation: Even Orthogonal Groups}
\author{Moshe Adrian}
\author{Eyal Kaplan}
\address{Adrian:  Department of Mathematics, Queens College, CUNY, Queens, NY 11367-1597}
\email{moshe.adrian@qc.cuny.edu}
\address{Kaplan: Department of Mathematics, Bar Ilan University, Ramat Gan 5290002, Israel}
\email{kaplaney@gmail.com}

\subjclass[2010]{Primary 11S37, 22E50; Secondary 11F85, 22E55}
\keywords{Simple supercuspidal, Local Langlands Conjecture, Rankin--Selberg method}

\begin{abstract}
Let $\pi$ be a simple supercuspidal representation of the split even special orthogonal group. We compute the
Rankin--Selberg $\gamma$-factors for rank $1$-twists of $\pi$ by quadratic tamely ramified characters
of $F^*$. We then use our results to determine the Langlands parameter of $\pi$ up to its restriction to the wild inertia subgroup, subject to an analogue of a work of Blondel, Henniart and Stevens for $\SO_{2l}$. In the particular case of the field $\Q_2$, we are able to describe the parameter completely.
\end{abstract}

\maketitle

\section{Introduction}\label{introduction}
Let $F$ be a local $p$-adic field of characteristic $0$ and $\SO_{2l}=\SO_{2l}(F)$ be the split special even orthogonal group of rank $l$.
In this work we compute a certain family of twisted gamma factors of an arbitrary simple supercuspidal representation $\pi$ of $\SO_{2l}$.
Our results enable us to explicitly identify the quadratic, tamely ramified $1$-dimensional summands of the Langlands parameter of $\pi$. It is expected that the rest of the parameter corresponds to a simple supercuspidal representation of $\GL_{2l-2}$, this can be verified by proving an analogue of the work of Blondel \textit{et al.} \cite{BHS17} for $\SO_{2l}$.  It can be shown that our results therefore give the Langlands parameter up to its restriction to the wild inertia subgroup, subject to such an analogue. The present work is the follow-up to \cite{Adrian2016,AdrianKaplan2018}, where the analogous computations were carried out and the theory of Rankin--Selberg integrals was applied, in order to determine the Langlands parameter of odd orthogonal groups and symplectic groups.

Let $\pi$ be a simple supercuspidal representation of $\SO_{2l}$. Throughout, supercuspidal represenations are assumed to be irreducible.
The representation $\pi$ is known to be generic, for a certain character of a maximal unipotent subgroup of $\SO_{2l}$.
Thus $\pi$ admits a local functorial lift $\Pi$ to $\GL_{2l}$, as defined by Cogdell \textit{et. al.} \cite[Proposition~7.2]{CKPS}.
In general, one can then study the Langlands parameter $\varphi_{\pi}$ of $\pi$ using an explicit local Langlands correspondence for the supercuspidal representations in the support of $\Pi$.

In our setting, when $p\ne2$, it is expected that $\varphi_{\pi}$ decomposes into $3$ summands, two of which are $1$-dimensional. Moreover, it is also expected that the $2l-2$ complement corresponds to a simple supercuspidal representation $\Pi'$ of $\GL_{2l-2}$. This representation is parameterized by a triplet consisting of a uniformizer $\varpi$ of $F$, a central character $\omega$, and a $(2l-2)$-th root of $\omega(\varpi)$.   For $p=2$ the situation is similar; now $\varphi_{\pi}$ is expected to decompose into a sum of a one $1$-dimensional representation and a
$2l-1$ complement, corresponding to a simple supercuspidal representation of $\GL_{2l-1}$.

In this work we find the quadratic, tamely ramified $1$-dimensional summands, by identifying the quadratic tamely ramified characters $\tau$ of $F^*$ such that the Rankin--Selberg $\gamma$-factor $\gamma(s,\pi\times\tau,\psi)$ (defined in \cite{Kaplan2013a,Kaplan2015}) has a pole at $s=1$. This information also yields the central character $\omega$ of $\Pi'$, and moreover we subsequently obtain the $(2l-2)$-th root of $\omega(\varpi)$ by computing $\gamma(s, \pi, \psi)$. What remains in order to fully describe the parameter is the uniformizer $\varpi$ and the proof that the complement of the computed two one-dimensional summands of $\varphi_{\pi}$ does indeed correspond to a simple supercuspidal of $\GL_{2l-2}$.   Our method does not provide these two items, but they are obtainable from an analogue of \cite{BHS17} for $\SO_{2l}$.

We turn to describe our results more precisely. The simple supercuspidal representations of $\SO_{2l}$ are parameterized by four pieces of data: a choice of uniformizer $\varpi$, an element $\alpha \in \kappa^{\times} / (\kappa^{\times})^2$ ($\kappa=\frak{o}/\frak{p}$), a sign $\epsilon = \pm 1$, and a central character $\omega$.  For convenience in later computations, we set $\gamma = -4 \alpha$.  More explicitly, let $\chi$ be an affine generic character of the pro-unipotent radical $I^+$ of an Iwahori group $I$, defined by an element $\gamma$.  The choice of uniformizer $\varpi$ in $F$ determines an element $g_{\chi}$ in $\SO_{2l}$ which normalizes $I$ and stabilizes $\chi$.  We can extend $\chi$ to $\langle g_{\chi} \rangle I^+$ in two different ways, since $g_{\chi}^2 = 1$.  Further extending $\chi$ by $\omega$ to the group $K = Z \langle g_{\chi} \rangle I^+$, and calling the new character $\chi$ again, we obtain a simple supercuspidal representation $\pi = \mathrm{Ind}_K^{\SO_{2l}} \chi$. For more details see \S~\ref{simple supercuspidals}.
The following is our main theorem, which characterizes the tamely ramified quadratic $1$-dimensional representations in the support of $\Pi$ (again, expected to be all of the $1$-dimensional summands).
\begin{thm}\label{mainmain}
Let $(\varpi, \gamma, \epsilon, \omega)$ be the parameters uniquely determined by $\pi$. Let $\tau$ be a quadratic tamely ramified character of $F^*$.
Then $\gamma(s, \pi \times \tau, \psi)$ is holomorphic and nonzero at $s=1$ if and only if
$\tau(\varpi) = -\chi(g_{\chi})\tau(\gamma)$.
\end{thm}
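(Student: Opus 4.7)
The strategy parallels the odd orthogonal and symplectic cases treated in \cite{Adrian2016,AdrianKaplan2018}. The Rankin--Selberg gamma factor is characterized by a functional equation of the form $Z^*(1-s,W,f) = \gamma(s,\pi\times\tau,\psi)\,Z(s,W,f)$, where $Z(s,W,f)$ is a local zeta integral attached to a Whittaker function $W$ of $\pi$ and a Tate-like section $f$ for $\tau|\cdot|^{s-1/2}$. The plan is to choose a specific pair $(W,f)$ adapted to the inducing data $(\varpi,\gamma,\epsilon,\omega)$ for which both integrals can be evaluated in closed form, and then read off $\gamma(s,\pi\times\tau,\psi)$.

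First I would take $W = W_v$, where $v\in\pi$ is the standard vector supported on $K = Z\langle g_\chi\rangle I^+$ with $v(1)=1$. The Whittaker function $W_v$ is then supported on a specific small piece of $\SO_{2l}$ that can be described via a Bruhat-type decomposition. Pairing with $f$ chosen to be the characteristic function of an appropriate coset $\varpi^k\mathfrak{o}^\times$, one expects $Z(s,W,f)$ to collapse to a monomial in $q^{-s}$ times a volume factor. Second, I would compute $Z^*(1-s,W,f)$ by applying the long Weyl element, changing variables, and decomposing the integration domain along the double-coset structure. After tracking the action on the affine generic character $\chi$, the integral reduces to a finite character sum over the residue field $\kappa$, structurally a Gauss sum in the additive character $\psi$ and the tamely ramified character $\tau$, with an overall factor of $\chi(g_\chi)$ arising from the two possible extensions of $\chi$ to $\langle g_\chi\rangle I^+$, and a factor of $\tau(\gamma)$ arising from the change of variable normalizing the affine generic coordinate.

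Finally, one analyzes the Gauss sum: it degenerates to a nonzero constant, making $\gamma(s,\pi\times\tau,\psi)$ a nonzero monomial in $q^{-s}$ and hence holomorphic and nonvanishing at $s=1$, precisely when the underlying additive character on $\kappa^\times$ becomes trivial. The triviality condition translates to the single equality $\tau(\varpi) = -\chi(g_\chi)\tau(\gamma)$, and in the opposite case the sum produces the expected $(1-q^{1-s})^{\pm1}$ factor that kills the value at $s=1$. The main obstacle will be the explicit Bruhat-cell analysis of $Z^*$: unlike in the odd orthogonal and symplectic settings of \cite{Adrian2016,AdrianKaplan2018}, the long Weyl element of $\SO_{2l}$ does not act as $-1$ on the torus, and the element $g_\chi$ now satisfies $g_\chi^2=1$, so its contribution enters the formulas as an explicit sign $\chi(g_\chi)$ rather than through a root of the uniformizer. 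Carefully tracking how $g_\chi$, the long Weyl element, and the additive coordinates of $\chi$ interact through the intertwining, and pinning down the correct sign conventions so that the final condition emerges in the stated form, is where most of the technical work will lie.
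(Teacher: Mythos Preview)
Your overall framework---pick a specific Whittaker function built from the inducing data and a specific section, compute both sides of the functional equation, and read off the $\gamma$-factor---is exactly the paper's strategy, and your identification of the roles of $\chi(g_\chi)$ and $\tau(\gamma)$ is correct. However, the mechanism you describe for $\Psi^*$ is not what actually happens, and following your plan as written would lead you astray.

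First, the section $f_s$ is not a Tate section on $\GL_1$ but a section in $\Ind_{B_{\SO_3}}^{\SO_3}(|\cdot|^{s-1/2}\tau)$, with $\SO_3$ embedded in $\SO_{2l}$; the ``dual'' integral $\Psi^*(W,f_s)$ is obtained by applying the normalized intertwining operator $M^*(\tau,s)$ to $f_s$, not by a long Weyl element acting on $W$. Second, no Gauss sum appears. What controls the pole is the normalization factor $C(s,\tau,\psi)$ of the intertwining operator, which equals (up to harmless constants) Tate's $\gamma^{\mathrm{Tate}}(2s-1,\tau^2,\psi)$; since $\tau^2=1$ this carries a simple pole at $s=1$. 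Third, the $\gamma$-factor is never a monomial in $q^{-s}$. The support analysis (the paper's Lemma~\ref{support}) shows that $W_0^\iota$ restricted to the relevant domain is supported on exactly two cosets, $UZI^+$ and $Ug_\chi ZI^+$, and the second coset contributes an infinite geometric series in $k\ge0$, not a finite residue-field sum. The upshot (Corollary~\ref{cor:psi *}) is that $\Psi(W^\iota,M(\tau,s)f_s)$ is a sum of two explicit rational functions, one with coefficient $\tau(\gamma)$ and one with coefficient $\chi(g_\chi)\tau(\varpi)$. The theorem then follows because at $s=1$ the bracket
\[
\tau(\gamma)\frac{q^{1/2-2s}}{1-q^{1-2s}}+\chi(g_\chi)\tau(\varpi)\frac{q^{-1/2-s}}{1-q^{-1}}
\]
has a simple zero precisely when $\tau(\varpi)=-\chi(g_\chi)\tau(\gamma)$, and this zero exactly cancels the simple pole coming from $\gamma^{\mathrm{Tate}}(2s-1,1,\psi)$. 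So the dichotomy is not ``Gauss sum degenerates vs.\ produces $(1-q^{1-s})^{\pm1}$,'' but rather ``two rational terms cancel the Tate pole vs.\ they do not.''
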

The theorem is proved in \S~\ref{easyside}. In fact our result is stronger: we compute $\gamma(s, \pi \times \tau, \psi)$ for arbitrary quadratic tamely ramified characters $\tau$, see Corollary~\ref{corollary:gamma for tamely ramified}.

Our main tool in this work is the $\gamma$-factor defined by the theory of Rankin--Selberg integrals for $\SO_{2l}\times\GL_n$ in \cite{Kaplan2015}, following the development of these integrals in \cite{GPS,Kaplan2010,Kaplan2012,Kaplan2013a,Kaplan2015a}. The $\gamma$-factor is essentially the proportionality factor between two integrals, related by an application of an intertwining operator. The proof of Theorem~\ref{mainmain} is based on a direct computation of this factor for $n=1$, and is among the first few applications of Rankin--Selberg integrals to results of this kind.

A subtle part of the definition of $\gamma(s,\pi\times\tau,\psi)$ is to normalize it properly, in order to obtain precise multiplicative formulas which identify this factor with the corresponding $\gamma$-factor of Shahidi (defined in \cite{Sh3}). While this normalization does not play a role in the determination of the poles, it is crucial for the computation of $\varphi_{\pi}$. Obtaining precise normalization is nontrivial. The equality between these $\gamma$-factors, in the context of Shimura-type integrals (proved in \cite{Kaplan2015}) was one of the ingredients in the work of Ichino \textit{et. al.} \cite{ILM} on the formal degree conjecture.
For other works on Rankin--Selberg integrals and their $\gamma$-factors, in the context of generic representations of classical groups,
see e.g., \cite{G,Soudry,Soudry3,GRS4,Soudry2}.

As mentioned above, this work is a follow-up to \cite{Adrian2016,AdrianKaplan2018}. The case of odd orthogonal groups \cite{Adrian2016} was a bit different in the sense that the lift $\Pi$ of $\pi$ was already expected to be simple supercuspidal. Indeed the twisted $\gamma$-factors had no poles, and their computation was sufficient to determine the Langlands parameter using, among other result, the works of M{\oe}glin \cite{Moeglin2014} and
Kaletha \cite{Kaletha2015}. For the symplectic case (\cite{AdrianKaplan2018}) and when $p \neq 2$, according to \cite{BHS17}, $\varphi_{\pi}$ decomposes into $2$ summands, one of them $1$-dimensional. This summand was again identified using an analogue of Theorem~\ref{mainmain}, and the computation of $\gamma(s,\pi,\psi)$ was then sufficient to obtain $\varphi_{\pi}$.  When $p = 2$, the symplectic case was similar to the odd orthogonal case, since the twisted $\gamma$-factors had no poles.

Simple supercuspidal representations are the supercuspidal representations of minimal nonzero depth, in the sense of Moy and Prasad \cite{MoyPrasad1994,MoyPrasad1996}. These representations were recently constructed
by Gross and Reeder \cite{GR10} and Reeder and Yu \cite{RY14}. Simple supercuspidal representations can be considered as a ``litmus paper" for statements on arbitrary supercuspidal representations. A few of the earlier studies on the construction of supercuspidal representations include \cite{Howe1977,Adler1998,BushnellKutzko1998,Stevens2008}.

The rest of this work is organized as follows. The Rankin--Selberg integral is described in \S~\ref{notation}. The simple supercuspidal representations are defined in \S~\ref{simple supercuspidals}. The computation of the $\gamma$-factor is carried out in \S~\ref{easyside}.  In \S~\ref{The Langlands parameter}, we describe the Langlands parameter up to its restriction to the wild inertia subgroup, subject to an analogue of \cite{BHS17} for $\SO_{2l}$.   Finally \S~\ref{normalization parameters} contains the computation of certain normalization factors used for the definition of the $\gamma$-factor.

\section{The groups and the Rankin--Selberg integral}\label{notation}
Let $F$ be a $p$-adic field of characteristic $0$, with a ring of integers $\mathfrak{o}$ and  maximal ideal $\mathfrak{p}$. Denote $\kappa = \frak{o} / \frak{p}$ and $q=|\kappa|$. Let $\varpi$ be a uniformizer ($|\varpi|=q^{-1}$). Fix the Haar measure $dx$ on $F$ which assigns the volume $q^{1/2}$ to $\frak{o}$, and define a measure $d^{\times}x$ on $F^{\times}$ by
$d^{\times}x=\frac{q^{1/2}}{q-1}|x|^{-1}dx$. We use the notation $\mathrm{vol}$ (resp., $\mathrm{vol}^{\times}$) to denote volumes of measurable subsets under $dx$ (resp., $d^{\times}x$), e.g. $\mathrm{vol}^{\times}(\frak{o}^{\times}) = 1$. Let $J_r\in\GL_r(F)$ denote the permutation matrix with $1$ along the anti-diagonal. For $g\in\GL_r(F)$, ${}^tg$ denotes the transpose of $g$, and $g^*=J_r{}^tg^{-1}J_r$.

Fix $\gamma\in F^*$. We define the orthogonal groups appearing in this work:
\begin{align*}
&\SO_{2l}(F)=\{g\in\SL_{2l}(F):{}^tgJ_{2l}g=J_{2l}\},\\
&\SO_{2n+1}(F)=\{g\in\SL_{2n+1}(F):{}^tgJ_{2n+1,\gamma}g=J_{2n+1,\gamma}\},\qquad J_{2n+1,\gamma}=
\left(\begin{smallmatrix} &  & J_n \\ & \gamma/2 &  \\ J_n &  & \end{smallmatrix}\right).
\end{align*}
Throughout, we identify linear groups with their $F$-points, i.e., $\SO_r=\SO_r(F)$.

Fix the Borel subgroup $B_{\SO_r}=T_{\SO_r}\ltimes U_{\SO_r}$ of upper triangular invertible matrices in $\SO_r$,
where $T_{\SO_r}$ is the diagonal torus. Denote $K_{\SO_r}=\SO_r(\mathfrak{o})$, which is a maximal compact open subgroup in $\SO_r$. Let $Z_{\SO_{r}}$ be the center of $\SO_{r}$. For a unipotent subgroup $U<\SO_r$, let $\overline{U}$ denote the opposite unipotent subgroup.

We describe the Rankin--Selberg integral for $\SO_{2l}\times\GL_1$, $l\geq2$, which will be our main tool for the computation of the $\gamma$-factor.
We follow the definitions and conventions of \cite{Kaplan2015}, where the full details of the construction for $\SO_{2l}\times\GL_n$ were given.

Let $\tau$ be a quasi-character of $F^*$. For $s \in \C$, let $V(\tau,s)$ be the space of the representation $\mathrm{Ind}_{B_{\SO_3}}^{\SO_3} (|\det|^{s-1/2}\tau)$ (normalized induction). The elements of $V(\tau,s)$ are complex-valued smooth functions $f_s$ on $\SO_3\times\GL_1$, such that
for all $a,m\in F^*$, $u\in U_{\SO_3}$ and $g\in \SO_3$,
\begin{align*}
f_s(\diag(m,1,m^{-1})ug, a) = |m|^{s}f_s(g,am)=|m|^{s}\tau(am)f_s(g,1).
\end{align*}
The right-action of $\SO_3$ on $V(\tau,s)$ is denoted $g\cdot f_s$.
A function $f_s$ is called a standard section if its restriction to $K_{\SO_3}$ is independent of $s$, and a holomorphic
section if its restriction to $K_{\SO_3}$ is a polynomial function in $q^{\mp s}$.

Let $l\geq2$ and fix a nontrivial additive character $\psi$ of $F$. Define the following non-degenerate character $\psi$ of $U_{\SO_{2l}}$ by
\begin{align}\label{def:character of U for integral}
\psi(u) = \psi (\sum_{i=1}^{l-2} u_{i,i+1} + \tfrac{1}{4} u_{l-1,l} - \gamma u_{l-1,l+1}).
\end{align}
Let $\pi$ be an irreducible $\psi^{-1}$-generic representation of $\SO_{2l}$, and denote the corresponding Whittaker model of $\pi$
by $\mathcal{W}(\pi, \psi^{-1})$.

We turn to describe the embedding of $\SO_3$ in $\SO_{2l}$. Let $Q=M\ltimes N$ be the standard parabolic subgroup of $\SO_{2l}$,
whose Levi part $M$ is isomorphic to $\GL_1\times\ldots\times \GL_1\times \SO_4$. For $l\geq3$, define a character $\psi_N$ of $N$ by
\begin{align*}
\psi_N(u)=\psi(\sum_{i=1}^{l-3}u_{i,i+1}+\tfrac{1}{4}u_{l-2,l}-\gamma u_{l-2,l+1}).
\end{align*}
The group $\SO_3$ is then embedded in $\SO_{2l}$ in the stabilizer of $\psi_N$ in $M$. When $l=2$, we embed $\SO_3$ in the subgroup of $g\in\SO_4$ such that $g(\tfrac{1}{4}e_2-\gamma e_3)=\tfrac{1}{4}e_2-\gamma e_3$, where $(e_1,\ldots,e_4)$ is the standard basis of the column space $F^4$. In coordinates, the image of $(x_{i,j})_{1\leq i,j\leq 3}\in\SO_3$ in $\SO_{2l}$ is given by
\begin{align*}
diag(I_{l-2}, \begin{pmatrix}1\\
&\tfrac14 & \tfrac14\\
&-\gamma & \gamma\\&&&1
\end{pmatrix}
\begin{pmatrix}
x_{1,1} &  & x_{1,2} & x_{1,3}\\
& 1 & & \\
x_{2,1} & & x_{2,2} & x_{2,3}\\
x_{3,1} & & x_{3,2} & x_{3,3}
\end{pmatrix}\begin{pmatrix}1\\
&2 & -\tfrac12\gamma^{-1}\\
&2 & \tfrac12\gamma^{-1}\\&&&1
\end{pmatrix}, I_{l-2}).
\end{align*}
The conjugating matrix is different from the one used in
\cite{Kaplan2015}. To explain this, let $e$ belong to the orthogonal complement of $\tfrac{1}{4}e_2-\gamma e_3$ in $F^4$ with respect to the bilinear form $(u,v)\mapsto {}^tu J_4 v$. Fixing $e$ in the span of $e_{2}$ and $e_{3}$, it belongs to the span of
$\tfrac{1}{4}e_2+\gamma e_{3}$, then $\SO_3$ is defined with respect to $(e_1,e,e_4)$ (for $l>2$, $(e_1,e_2,e_3,e_4)$ is replaced with $(e_{l-1},e_l,e_{l+1},e_{l+2})$). In \cite{Kaplan2015} $2\gamma$ was assumed to be a square (in the split case), then $e$ could be scaled to a unit vector and the Gram matrix of $(e_1,e,e_4)$ was $J_3$. Without this assumption we take here $e=\tfrac{1}{4}e_l+\gamma e_{l+1}$ and work with
$J_{3,\gamma}$. In the general case of an arbitrary $n<l$, the definition of $\SO_{2n+1}$ is then using $J_{2n+1,\gamma}$.

Also let
\begin{align*}
R^{l,1} =\left\{\begin{pmatrix}
1 & & & & \\
r & I_{l-2} & & & \\
& & I_2 & &\\
& & & I_{l-2} &\\
& & & r' & 1
\end{pmatrix} \in \SO_{2l}\right\},\qquad
w^{l,1}=\begin{pmatrix}
 &1 & & & \\
 I_{l-2} & & & \\
& & I_2 & &\\
& & & & I_{l-2} \\
& & & 1 &
\end{pmatrix} \in \SO_{2l}.
\end{align*}
We will occasionally refer to $r\in R^{l,1}$ also as a column vector in $F^{l-2}$.
Now we can define the Rankin--Selberg integral for $\pi\times\tau$: for any $W\in\mathcal{W}(\pi, \psi^{-1})$ and a holomorphic section $f_s$, the integral is defined for $\Re(s)\gg0$ by
\begin{align}\label{zeta}
\Psi(W, f_s)=\int_{U_{\SO_{3}}\backslash\SO_{3}}\int_{R^{l,1}}W(rw^{l,n} h) f_s(h,1)  \,dr\,dh.
\end{align}
It admits meromorphic continuation to a rational function in $q^{-s}$.

Next consider the intertwining operator
\begin{align*}
M(\tau,s):V(\tau,s)\rightarrow V(\tau^{-1},1-s)
\end{align*}
given by the meromorphic continuation of the integral
\begin{align*}
M(\tau,s)f_s(h,a) = \int_{U_{\SO_3}} f_s(w_1 u h, -a^{-1}) d u,\qquad w_1=\left(\begin{smallmatrix}
&  & 1 \\& -1 &  \\1 &  & \end{smallmatrix}\right).
\end{align*}
The measure $du$ is the additive measure $dx$ of $F$, where we identify $u\in U_{\SO_3}$ with $F$ via $u\mapsto u_{1,2}$.
The normalized intertwining operator $M^*(\tau,s)=C(s,\tau,\psi)M(\tau,s)$ is defined by the functional equation
\begin{align}\label{shahidigammafactor}
&\int_{U_{\SO_3}} f_s(w_1u,1) \psi^{-1}(u_{1,2}) \,du
=C(s,\tau,\psi)\int_{U_{\SO_3}} M(\tau,s)f_s(w_1u,1) \psi^{-1}(u_{1,2}) \,du.
\end{align}
Note that we omitted the matrix $d_1=-1$ appearing on both sides of this equation in \cite[(3.5)]{Kaplan2015}, because $\tau(-1)=\tau^{-1}(-1)$.
The constant $C(s,\tau,\psi)$ is essentially Shahidi's $\gamma$-factor $\gamma(2s-1, \tau, S^2, \psi)$ defined in \cite{Sh3}, up to a factor of the form $Bq^{As}$ where $A$ and $B$ are constants depending only on $\tau$, $\psi$ and $F$. For our purpose here we need to find the precise value of $C(s,\tau,\psi)$ and we have the following proposition, proved in \S~\ref{normalization parameters} below.
\begin{prop}\label{proposition:C factor}
Let $\gamma^{\mathrm{Tate}}(s,\tau^2,\psi)$ be the $\gamma$-factor of Tate \cite{Tate} (see \S~\ref{normalization parameters}). We have
\begin{align}\label{eq:C factor}
C(s,\tau,\psi)&=\tau^4(2)|2|^{4s}\tau^{-1}(\gamma)|\gamma|^{-s-1}\gamma^{\mathrm{Tate}}(2s-1,\tau^2,\psi).
\end{align}
\end{prop}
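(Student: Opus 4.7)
The plan is to compute $C(s,\tau,\psi)$ by evaluating both sides of (2.2) on an explicit test section, reducing each to a Tate local zeta integral, and then applying Tate's local functional equation. The first step is to make the group $\SO_3$ concrete: solving ${}^t u\,J_{3,\gamma}\,u=J_{3,\gamma}$ yields the parametrization
\[
u(c)=\begin{pmatrix}1 & -c\gamma/2 & -c^2\gamma/4 \\ 0 & 1 & c \\ 0 & 0 & 1\end{pmatrix},\qquad u(c)_{1,2}=-c\gamma/2,
\]
so the measure $du$, identified with $dx$ via $u\mapsto u_{1,2}$, equals $|\gamma/2|\,dc$ in the coordinate $c$, and the character in (2.2) becomes $\psi^{-1}(u(c)_{1,2})=\psi(c\gamma/2)$. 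The opposite unipotent $\overline{U_{\SO_3}}$ is parametrized in parallel by $\bar u(\alpha)$ with $\bar u(\alpha)_{2,1}=\alpha$, and $U_{\SO_3}$ is an abelian one-parameter subgroup with $u(c_1)u(c_2)=u(c_1+c_2)$. A direct $3\times 3$ computation using $w_1^2=I$ gives the identities $w_1\,u(c)\,w_1=\bar u(-c)$ and $\bar u(\tilde c)\,w_1 = w_1\,u(-\tilde c)$.

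Using these identities, the intertwining operator unfolds cleanly:
\[
M(\tau,s)f_s\bigl(w_1 u(c),1\bigr)=|\gamma/2|\int_F f_s\bigl(w_1\,u(c-\tilde c),-1\bigr)\,d\tilde c.
\]
I would then realize $V(\tau,s)$ via a Mackey-type isomorphism with Schwartz functions on the open Bruhat cell: to each $\phi\in\mathcal S(F)$ attach a standard section $f_s^\phi$ with $f_s^\phi(\bar u(\alpha),1)=\phi(\alpha)$, and compute $f_s^\phi(w_1 u(c),1)$ from the Iwasawa/Bruhat decomposition of $w_1 u(c)$ (which for $c\neq 0$ produces a torus factor $\diag(m(c),1,m(c)^{-1})$ with $m(c)$ a monomial in $c\gamma/2$). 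The transformation rule $f_s(\diag(m,1,m^{-1})g,a)=|m|^s\tau(am)f_s(g,1)$ then expresses both Whittaker pairings in (2.2) as explicit one-variable integrals of $\phi$ (respectively, its Fourier transform) against $\psi(c\gamma/2)$, weighted by $\tau(m(c))|m(c)|^s$ and its dual. After the substitution $c\mapsto 2c/\gamma$ absorbing the character twist, the LHS reduces to the Tate integral $Z(2s-1,\tau^2,\psi)$ and the RHS to $Z(2-2s,\tau^{-2},\psi)$; the appearance of $\tau^2$ rather than $\tau$ traces to the quadratic entry $-c^2\gamma/4$ in $u(c)$, which enters $m(c)$ through the Iwasawa decomposition.

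Tate's local functional equation $Z(1-s',\tau^{-1},\psi)=\gamma^{\mathrm{Tate}}(s',\tau,\psi)Z(s',\tau,\psi)$, applied with $\tau\to\tau^2$ and $s'=2s-1$, then collapses the ratio defining $C(s,\tau,\psi)$ to $\gamma^{\mathrm{Tate}}(2s-1,\tau^2,\psi)$ multiplied by the accumulated normalization. The main obstacle is the careful accounting of that normalization: the $|\gamma/2|$ from the measure on $U_{\SO_3}$, the Jacobian $|2/\gamma|$ from the substitution, the values $\tau(m(c))$ and $|m(c)|^s$ from the Iwasawa coordinates (with $m(c)$ involving $c\gamma/2$ linearly on one side and picking up its quadratic companion through the $-c^2\gamma/4$ entry on the other), the harmless sign $\tau(-1)$, and the $q^{1/2}$ conventions of $dx$ which cancel between the two sides of (2.2). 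Verifying that all of these combine to precisely $\tau^4(2)\,|2|^{4s}\,\tau^{-1}(\gamma)\,|\gamma|^{-s-1}$ is a patience exercise: the exponent $|2|^{4s}$ and the factor $\tau^4(2)$ trace back to two combined contributions (from the substitution and from the quadratic Iwasawa entry), while $\tau^{-1}(\gamma)|\gamma|^{-s-1}$ arises from the single linear factor of $J_{3,\gamma}$.
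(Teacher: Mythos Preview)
Your overall strategy---choose a test section, reduce both Whittaker integrals in (2.2) to one-variable integrals, and invoke Tate's functional equation---matches the paper's. But the implementation is genuinely different, and your outline contains a concrete error.

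The paper does not build sections from $\phi\in\mathcal S(F)$ on the big cell. Instead it uses the exceptional isomorphism $\imath:Z_{\GL_2}\backslash\GL_2\to\SO_3$ and constructs Godement--Jacquet type sections from $\Phi\in\mathcal S(F^2)$:
\[
f_{\Phi,\tau,s}(h,a)=\int_{Z_{\GL_2}}\Phi\bigl(e_1 z\,\imath^{-1}(h)\bigr)\,\tau\bigl(\det z\imath^{-1}(h)\bigr)\,\bigl|\det z\imath^{-1}(h)\bigr|^{s}\,dz.
\]
The point of this detour is that $M(\tau,s)$ applied to $f_{\Phi,\tau,s}$ becomes, after pulling back through $\imath$, a Tate zeta integral in the $Z_{\GL_2}$-variable composed with the Fourier transform on $F^2$; the two sides of (2.2) are then visibly related by Tate's functional equation and a partial Fourier inversion, and the constants fall out of explicit changes of variable. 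Your direct approach on $\SO_3$ can be made to work, but the intertwining operator is less transparent there and the bookkeeping is correspondingly heavier.

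There is also an actual error in your unfolding. You assert
\[
M(\tau,s)f_s\bigl(w_1 u(c),1\bigr)=|\gamma/2|\int_F f_s\bigl(w_1\,u(c-\tilde c),-1\bigr)\,d\tilde c,
\]
but this contradicts the identity $w_1 u(\tilde c)w_1=\bar u(-\tilde c)$ that you yourself record: since $w_1^2=I$, one gets $w_1 u(\tilde c)\cdot w_1 u(c)=\bar u(-\tilde c)\,u(c)$, not $w_1 u(c-\tilde c)$. The correct expression forces a second Bruhat decomposition of $\bar u(-\tilde c)u(c)$ before you can evaluate $f_s^\phi$, so the integrals do not collapse as cleanly as you claim, and the reduction of both sides to Tate integrals in a single Schwartz function needs to be redone. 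Finally, since the entire content of the proposition \emph{is} the constant $\tau^4(2)\,|2|^{4s}\,\tau^{-1}(\gamma)\,|\gamma|^{-s-1}$, deferring its verification to a ``patience exercise'' leaves the proof essentially unwritten.
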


The integral $\Psi^*(W, f_s)=\Psi(W, M^*(\tau,s)f_s)$ is absolutely convergent in $\Re(s)\ll0$, and the functional equation is defined by
\begin{align}\label{gamma def}
\gamma(s,\pi\times\tau,\psi)\Psi(W, f_s)=\pi(-I_{2l})\tau(-1)^{l}\left(\tau^2(2)|2|^{2s-1}\tau^{-2}(\gamma)|\gamma|^{-2s+1}\right)\Psi^*(W, f_s).
\end{align}
Since the definition of $\SO_{3}$ here and the choice of vector $e$ are different from \cite{Kaplan2015}, the normalization
factor appearing on the right hand side of \eqref{gamma def} is different. We compute this factor, i.e., prove \eqref{gamma def}, in \S~\ref{normalization parameters}.

\begin{rmk}
In the split case in \cite{Kaplan2015}, the parameter $\gamma$ was chosen such that $2\gamma=\rho$ was a square, because the same parameter was used for
the embedding of $\SO_{2l}$ in $\SO_{2n+1}$. The group $\SO_{2l}$ was embedded in $\SO_{2n+1}$ in the stabilizer of a character of a unipotent subgroup of $\SO_{2n+1}$. That character depended on $\gamma$, and its stabilizer contained either the split or the quasi-split and nonsplit $\SO_{2l}$, depending on $\rho$. Also note that $\SO_{2n+1}$ here is isomorphic to the special orthogonal group defined with respect to $J_{2n+1}$ in \cite{Kaplan2015} (for any $\gamma\in F^*$). The factor $\tau^{-2}(\gamma)|\gamma|^{-2s+1}$ in
\eqref{gamma def} was denoted $c(s,l,\tau,\gamma)$ in \textit{loc. cit.}
\end{rmk}

\subsection*{Acknowledgements}
We would like to thank Gordan Savin for helpful conversations. Support to Adrian was provided by a grant from the Simons Foundation \#422638 and by a PSC-CUNY award, jointly funded by the
Professional Staff Congress and The City University of New York. Kaplan was
supported by the Israel Science Foundation, grant number 421/17.

\section{The simple supercuspidal representations of $\SO_{2l}$}\label{simple supercuspidals}

In this section, we recall the construction of the simple supercuspidal representations of $\SO_{2l}$.
Let $\Delta_{\SO_{2l}}=\{\epsilon_1 - \epsilon_2, \ldots, \epsilon_{l-1} - \epsilon_l, \epsilon_{l-1} + \epsilon_l\}$
denote the set of simple roots of $\SO_{2l}$, determined by our choice of the Borel subgroup $B_{\SO_{2l}}$.
Let $X^*(T_{\SO_{2l}})$ denote the character lattice of $T_{\SO_{2l}}$ and $T_0$ be the maximal compact subgroup of $T_{\SO_{2l}}$. Set
\begin{align*}
T_1 = \langle t \in T_0 : \lambda(t) \in 1 + \mathfrak{p} \ \forall \lambda \in X^*(T_{\SO_{2l}}) \rangle.
\end{align*}

We also have the set of affine roots $\Psi$, and we denote the subset of
simple affine roots by $\Pi$ and positive affine roots by $\Psi^+$. For $\psi \in \Psi$, $U_{\psi}$ is the
associated affine root group in $\SO_{2l}$. With our identifications,
\begin{align*}
\Pi = \{\epsilon_1 - \epsilon_2, \epsilon_2 - \epsilon_3,\ldots, \epsilon_{r-1} - \epsilon_r, \epsilon_{r-1} + \epsilon_r, 1 - \epsilon_1 - \epsilon_2 \}.
\end{align*}
Also define
\begin{align*}
I = \langle T_0, U_{\psi} : \psi \in \Psi^+ \rangle,\qquad I^+ = \langle T_1, U_{\psi} : \psi \in \Psi^+ \rangle.
\end{align*}

Let $\psi$ be a character of $F$ of level $1$.
According to \cite{GR10, RY14}, the \emph{affine generic characters} of $I^+$ take the form
\begin{align*}
\chi_{\underline{a}}(h) = \psi(a_1h_{1,2} + a_2 h_{2,3} + \cdots + a_{l-1} h_{l-1,l} + a_n h_{l-1,l+1} + a_{l+1} \frac{h_{2l-1,1}}{\varpi}), \ \ h \in I^+,
\end{align*}
where $\underline{a}=(a_1, a_2, ..., a_{l+1})\in (\mathfrak{o}^{\times})^{l+1}$, and because the level of $\psi$ is $1$,
we can further assume $a_i\in\kappa^{\times}$ for each $i$.
A complete set of representatives of $T_0$-orbits of affine generic characters of $I^+$ are given by the tuples
$(1, 1, \ldots, 1, 1,\alpha, t)$, where $\alpha$ varies over $\kappa^{\times} / (\kappa^{\times})^2$ and $t \in \kappa^{\times}$. Instead of viewing the affine generic characters of $I^+$ as parameterized by $\alpha \in \kappa^* / (\kappa^*)^2$ and $t \in \kappa^*$, we will set $t = 1$ and let the affine generic characters be parametrized by $\alpha \in \kappa^* / (\kappa^*)^2$ and the various choices of uniformizer $\varpi$ in $F$.

Let $x$ be the barycenter of the fundamental alcove and, for simplicity of notation, set $\chi = \chi_{\alpha} = \chi_{\underline{a}}$, noting that (up to the choice of a uniformizer $\varpi$), this character depends only on $\alpha$.
Simple supercuspidal representations are constructed using induction from compact subgroups. We describe this construction by explicating \cite[\S2]{RY14} for $\SO_{2l}$.

Put
\begin{align*}
g_{\chi} =
\begin{pmatrix}
 & & & & & -\varpi^{-1} \\
& I_{l-2} & 0 & 0 & 0\\
& 0 & 0  & \alpha^{-1} &0 &\\
& 0 & \alpha & 0 &0 \\
& 0 & 0 & 0 & I_{l-2}\\
-\varpi & & & & &
\end{pmatrix}\in\SO_{2l},
\end{align*}
and note that $g_{\chi}$ stabilizes $\chi$ since
\begin{align*}
\chi(g_{\chi} h g_{\chi}^{-1}) = \psi(-\varpi^{-1} h_{2l,2} + h_{23} + \cdots + h_{l-1, l} + \alpha h_{l-1,l+1} - h_{2l-1,2l})
\end{align*}
and the form defining $\SO_{2l}$ implies $h_{2l,2} = -h_{2l-1,1}$ and $h_{2l-1,2l} = -h_{12}$.

Let $H_{x, \chi} = Z \langle g_{\chi} \rangle I^+$
($H_{x,\chi}$ was defined in greater generality in \cite[\S2]{RY14}). Let $\omega$ be a character of $Z$, thereby extending $\chi$ from $I^+$ to $ZI^+$.  We may extend $\chi$ to a character $\chi_{\alpha}^{\omega}$ of $H_{x, \chi}$ by setting $\chi_{\alpha}^{\omega}(g_{\chi}) = \pm 1$, since $g_{\chi}^2 =1$.  After such an extension, which we denote again by $\chi$, we have that $\pi = \pi_{\alpha}^{\omega} =  Ind_{H_{x,\chi}}^G \chi$ is a simple supercuspidal representation (see \cite[\S2]{RY14}).

Define the character
\begin{align*}
\psi_{\alpha}(u) = \psi(\sum_{i=1}^{l-2} u_{i,i+1} + u_{l-1,l} + \alpha u_{l-1,l+1}),\qquad u\in \SO_{2l}.
\end{align*}
The representation $\pi$ is $\psi_{\alpha}$-generic.
For the purpose of constructing the integral,
put
\begin{align*}
\iota=\diag(I_{l-1},1/4,4,I_{l-1}),\qquad \alpha=-\tfrac\gamma4.
\end{align*}
By the definition of affine generic characters, $|\gamma / 4| = 1$. Then we have the isomorphic representation $\pi^{\iota}$, defined on the space of $\pi$ by
\begin{align*}
\pi^{\iota}(g)=\pi({}^{\iota}g)=\pi(\iota^{-1}g\iota).
\end{align*}
For $W\in W(\pi,\psi_{-\gamma/4}^{-1})$, define $W^{\iota}(g)=W({}^{\iota}g)$.
The map $W\mapsto W^{\iota}$ is an isomorphism
$W(\pi,\psi_{-\gamma/4}^{-1})\cong W(\pi^{\iota},\psi^{-1})$, where for the model $W(\pi^{\iota},\psi^{-1})$, $\psi$ is defined by
\eqref{def:character of U for integral}. Since $W(\pi^{\iota},\psi^{-1})$ is also a Whittaker model for $\pi$, definition \eqref{gamma def} implies
\begin{align*}
\gamma(s,\pi\times\tau,\psi)=\gamma(s,\pi^{\iota}\times\tau,\psi).
\end{align*}

\section{The computation of $\gamma(s,\pi\times\tau,\psi)$}\label{easyside}
Throughout this section and \S~\ref{The Langlands parameter},
$\psi$ is taken to be of level $1$ and $\tau$ is tamely ramified.
In this section, we compute $\gamma(s,\pi\times\tau,\psi)$ using a specific choice of data. Recall
$\pi = \mathrm{Ind}_{Z \langle g_{\chi} \rangle I^+}^{\SO_{2l}} \chi$. Let
$I_{\SO_3}^+$ be the pro-unipotent part of the standard Iwahori subgroup of $\SO_3$.
For the Whittaker function, consider $W=(w^{l,1})^{-1}\cdot W_0$ where
$W_0\in \mathcal{W}(\pi,\psi_{-\gamma/4}^{-1})$ is given by
\begin{align*}
W_0(g) =
\begin{cases}
\psi_{-\gamma/4}^{-1}(u) \chi(g_{\chi}^i) \omega(z) \chi(y) & g = u g_{\chi}^i z  y,\quad u \in U_{\SO_{2l}}, z\in Z, i=0,1,y\in I^+ ,\\
0  &  \text{otherwise.}
\end{cases}
\end{align*}
Then $W^{\iota}\in W(\pi^{\iota},\psi^{-1})$. Define $f_s$ by
\begin{align*}
 f_s(g,a)= \begin{cases}
|m|^{s} \tau(am)  & g = \diag(m,1,m^{-1})uy,\quad m\in F^*, u\in U_{\SO_3}, y\in I_{\SO_3}^+,\\
0 &  \text{otherwise.}
\end{cases}
\end{align*}

We can write a general element of the lower Borel subgroup $\overline{B}_{\SO_3}$ in the form
\begin{align*}
b=
\begin{pmatrix}
    a &  &  \\
     & 1 &  \\
    && a^{-1}
  \end{pmatrix}\begin{pmatrix}
    1 &  &  \\
    x & 1 &  \\
    -\tfrac\gamma4x^2 & -\tfrac\gamma2x& 1
  \end{pmatrix},\qquad a\in F^*, x\in F.
\end{align*}
Define the right invariant Haar measure $db$ on $\overline{B}_{\SO_3}$ by $db=|a|^{-1}d^*adx$.

Note that since $\SO_3$ is defined with respect to $J_{\gamma/2}$, if $b$ is such that $a=1$, then it belongs to $I_{\SO_3}^+$ if and only if $x\in\mathfrak{p}$.

\begin{lem}\label{support}
Assume $b$ as above.
\begin{enumerate}[leftmargin=*]
\item\label{it:support 1} If ${}^{\iota}(rw^{l,1}b(w^{l,1})^{-1})\in UZI^+$,
then $a \in 1 + \mathfrak{p}$, $|x|<1$, $r\in\mathfrak{p}^{l-2}$.
\item\label{it:support 2} If ${}^{\iota}(rw^{l,1}b(w^{l,1})^{-1})\in Ug_{\chi}ZI^+$, then for some $k\geq0$, we have
$|a|=q^{2k+1}$, $|x|=q^k$ and $\tfrac{\gamma}4x^2a^{-1}\in  \varpi \cdot (1 + \mathfrak{p})$, and also $r\in\mathfrak{p}^{l-2}$.
\end{enumerate}
\end{lem}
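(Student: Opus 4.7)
The strategy is a direct matrix computation combined with the Iwahori factorization characterization of $U_{\SO_{2l}} Z I^+$. The key observation I would use is that $\iota = \diag(I_{l-1},1/4,4,I_{l-1})$ acts nontrivially only on coordinates $l,l+1$, whereas $r$ and $w^{l,1}$ act nontrivially only on the complementary coordinates; consequently $\iota$ commutes with both, and
\[
M := {}^{\iota}(r w^{l,1} b (w^{l,1})^{-1}) = r\, w^{l,1}\, ({}^{\iota} b)\, (w^{l,1})^{-1}.
\]
I would then compute the $4 \times 4$ block of the embedded $b$ at rows/columns $l-1,\ldots,l+2$ from the formula $C_1 X C_2$ of Section \ref{notation}. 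Applying ${}^{\iota}$ rescales entries involving rows/columns $l,l+1$ by powers of $4$, all of unit absolute value since $|\gamma/4| = 1$. Conjugation by $w^{l,1}$ then moves the four occupied coordinates $l-1,l,l+1,l+2$ to $1,l,l+1,2l$ respectively, and multiplication by $r$ on the left contributes the $r$-entries in column $1$ and the $r'$-entries in row $2l$.

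The criterion I would apply is: $g \in U_{\SO_{2l}} Z I^+$ if and only if the diagonal entries of $g$ lie in $\pm(1+\mathfrak{p})$ with a uniform sign and the strictly sub-diagonal entries lie in $\mathfrak{p}$; entries above the diagonal are unconstrained since they can be absorbed into the $U_{\SO_{2l}}$ factor. For part (i) I would apply the criterion to $M$ directly; for part (ii), I would use $g_\chi^2 = I$ to rewrite the hypothesis as $M g_\chi \in U_{\SO_{2l}} Z I^+$ and apply the criterion to $M g_\chi$.

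For part (i), the $(1,1)$ entry of $M$ is $a$, while the remaining diagonal entries are $1$ (from the identity blocks and the $l,l+1$ positions) or $1/a$ (at position $2l$). The uniform-sign condition combined with the presence of $1$'s on the diagonal forces $a \in 1 + \mathfrak{p}$. The sub-diagonal entries in positions $(i,1)$ for $i = 2,\ldots,l-1$ equal $r_{i-1}\cdot a$, yielding $r \in \mathfrak{p}^{l-2}$; the rescaled $(l,1)$ and $(l+1,1)$ entries $x$ and $-\alpha x$ force $|x| < 1$. All remaining sub-diagonal constraints are automatic, and the $r'$-conditions in row $2l$ reduce to the same $r$-condition via the orthogonality relation between $r$ and $r'$ in $\SO_{2l}$. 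For part (ii), right multiplication by $g_\chi$ replaces column $1$ of $M$ by $-\varpi$ times column $2l$ of $M$, column $2l$ by $-\varpi^{-1}$ times column $1$, and interchanges columns $l,l+1$ with scalars $\alpha,\alpha^{-1}$. The critical diagonal entry $(M g_\chi)_{2l,2l} = -\varpi^{-1} M_{2l,1} = \varpi^{-1} \cdot \tfrac{\gamma}{4} x^2 a^{-1}$ must lie in $\pm(1+\mathfrak{p})$; matching signs with the other unit diagonal entries of $M g_\chi$ pins this to $\tfrac{\gamma}{4} x^2 a^{-1} \in \varpi (1 + \mathfrak{p})$. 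Taking absolute values gives $|x|^2 = q^{-1} |a|$, so writing $|x| = q^k$ yields $|a| = q^{2k+1}$; the sub-diagonal condition $(M g_\chi)_{2l,1} = -\varpi M_{2l,2l} = -\varpi / a \in \mathfrak{p}$ forces $k \geq 0$. The $r$-condition is obtained exactly as in (i), since the relevant sub-diagonal entries at positions $(i,1)$ for $i = 2,\ldots,l-1$ are unaffected by right multiplication by $g_\chi$.

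The main obstacle is the combinatorial bookkeeping through the sequence of embeddings, conjugations and multiplications, and verifying that no off-diagonal entries of $M$ or $M g_\chi$ impose independent constraints beyond those stated. A related subtlety is the sign analysis for $\pm(1+\mathfrak{p})$: the uniform-sign requirement must be tracked carefully, both to exclude $a \in -(1+\mathfrak{p})$ in part (i) and to pin down the $+$ sign in $\varpi(1+\mathfrak{p})$ in part (ii).
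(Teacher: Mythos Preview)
Your overall strategy---compute $M={}^{\iota}(rw^{l,1}b(w^{l,1})^{-1})$ explicitly and then read off membership conditions---matches the paper, and your computation of $M$ is correct. The gap is in the membership criterion you state. The claim ``$g\in U_{\SO_{2l}}ZI^+$ if and only if the diagonal entries lie in $\pm(1+\mathfrak{p})$ and the strictly sub-diagonal entries lie in $\mathfrak{p}$'' is false in general: left multiplication by $u\in U_{\SO_{2l}}$ adds multiples of \emph{lower} rows to upper rows, so it can change diagonal and sub-diagonal entries, not just super-diagonal ones. Your criterion \emph{is} valid when $g$ is lower triangular, because then $g\in U_{\SO_{2l}}ZI^+$ forces $g\in ZI^+$ (the upper unipotent factor must be trivial). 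Since the computed $M$ is lower triangular, part~(i) goes through, though you should say why.

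For part~(ii) the criterion genuinely fails. Right multiplication by $g_\chi$ swaps columns $1$ and $2l$ (with scalars $-\varpi,-\varpi^{-1}$) and columns $l,l+1$; hence $(Mg_\chi)_{1,1}=-\varpi M_{1,2l}=0$ and $(Mg_\chi)_{l,l}=\alpha M_{l,l+1}=0$, so $Mg_\chi$ is not lower triangular and your diagonal test is vacuously violated. Relatedly, your assertion that ``the sub-diagonal entries at positions $(i,1)$ for $i=2,\ldots,l-1$ are unaffected by right multiplication by $g_\chi$'' is incorrect: column $1$ of $Mg_\chi$ is $-\varpi$ times column $2l$ of $M$, so those entries become $0$, and the $r$-data move to column $2l$ above the diagonal. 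The $r$-condition instead comes from the $r'$-entries in the last row (via the relation between $r$ and $r'$).

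The paper avoids this by using only the last row, which \emph{is} invariant under left multiplication by $U_{\SO_{2l}}$. For $M\in Ug_\chi ZI^+$ one has (last row of $M$) $=\pm e_{2l}g_\chi i^+=\mp\varpi\cdot(\text{first row of }i^+)$, i.e.\ first entry in $\mp\varpi(1+\mathfrak{p})$ and remaining entries in $\mathfrak{p}$. Comparing with the explicit last row of $M$ gives all the stated conditions at once; the sign is fixed by comparing with rows $2,\ldots,l-1$ of $M$, which are $e_j+ar_{j-1}e_1$. Your last-row computations $(Mg_\chi)_{2l,2l}$ and $(Mg_\chi)_{2l,1}$ are in fact instances of exactly this argument, so the fix is to replace the global ``diagonal/sub-diagonal'' criterion by the last-row analysis.
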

\begin{proof}
The image of $b$ in $\SO_{2l}$ is given by
\begin{align}\label{b in SO 2l}
\begin{pmatrix}
I_{l-2} & & & & &\\
& a & &  & \\
& \tfrac14 x & 1 &  & &\\
& \gamma x &  & 1 & \\
& -\tfrac\gamma4x^2a^{-1} & -\gamma xa^{-1} & -\tfrac14xa^{-1} & a^{-1}\\
& & & & &  I_{l-2}
\end{pmatrix},
\end{align}
and
\begin{align*}
{}^{\iota}(rw^{l,1}b(w^{l,1})^{-1})=
\begin{pmatrix}
a &  & & &\\
ar & I_{l-2} & & &\\
x & & 1& & \\
\tfrac\gamma4x & & & 1 & \\
& & &  & I_{l-2} \\
-\tfrac\gamma4x^2a^{-1} & & -\tfrac\gamma4xa^{-1} &  -xa^{-1} & r' & a^{-1}
\end{pmatrix}.
\end{align*}
Both assertions follow from this; note first that ${}^{\iota}(rw^{l,1}b(w^{l,1})^{-1}) \notin U \cdot z \cdot I^+$, where $z$ denotes the nontrivial central element in $\SO_{2l}$.  Also note that we are using $|\tfrac\gamma4|=1$, and for the second part of the Lemma,
consider the entries of the last row of an element in $U g_{\chi} Z I^+$.
\end{proof}

\begin{cor}\label{cor:psi W f}
We have
$\Psi(W^{\iota},f_s)= \mathrm{vol}^{\times}(1 + \mathfrak{p})\mathrm{vol}(\mathfrak{p})^{l-1}$.
\end{cor}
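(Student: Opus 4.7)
The plan is to unfold $\Psi(W^\iota,f_s)$ through the open Bruhat cell $U_{\SO_3}\overline{B}_{\SO_3}\subset\SO_3$ and pin down the joint support using Lemma~\ref{support} together with the support of $f_s$. Parameterizing $b\in\overline{B}_{\SO_3}$ by $(a,x)$ as in the excerpt, the integral becomes
\[
\Psi(W^\iota,f_s)=\int_{\overline{B}_{\SO_3}}\int_{R^{l,1}} W^\iota(rw^{l,1}b)\, f_s(b,1)\,dr\,db,\qquad db=|a|^{-1}d^{\times}a\,dx.
\]

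I first determine the support of $f_s$ on $\overline{B}_{\SO_3}$. Since $f_s$ is supported on $B_{\SO_3}I^+_{\SO_3}$ and the $TU\overline{U}$ decomposition is unique, this intersects $\overline{B}_{\SO_3}$ in exactly $T_{\SO_3}\cdot(\overline{U}_{\SO_3}\cap I^+_{\SO_3})$; in coordinates this forces $x\in\frak{p}$, and on this set $f_s(b,1)=|a|^s\tau(a)$. Combined with Lemma~\ref{support}: item~\eqref{it:support 2} requires $|x|\geq1$, which is incompatible with $x\in\frak{p}$, so that branch drops out; only branch~\eqref{it:support 1} survives, adding the constraints $a\in 1+\frak{p}$ and $r\in\frak{p}^{l-2}$.

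On this common support I would rewrite $W^\iota(rw^{l,1}b)=W_0\bigl({}^\iota(rw^{l,1}b(w^{l,1})^{-1})\bigr)$, exploiting that $\iota$ commutes with $w^{l,1}$ (which acts as $I_2$ on the middle two coordinates where $\iota$ lives) and that $\iota^{-1}r\iota=r$ (the nonzero entries of $r$ sit in column $1$ or row $2l$, avoiding the $l,l+1$ coordinates). The explicit matrix exhibited in the proof of Lemma~\ref{support} then shows the argument lies in $I^+$ and is lower unitriangular, so $W_0$ collapses to $\chi$ applied to it. The key observation is that $\chi$ reads only the entries at positions $(1,2),(2,3),\ldots,(l-1,l),(l-1,l+1)$ and $(2l-1,1)$: the first $l$ lie strictly above the diagonal and vanish automatically, while the $(2l-1,1)$ entry of the explicit matrix is also zero by inspection. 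Hence $W^\iota(rw^{l,1}b)=1$, and $f_s(b,1)=1$ as well since $|a|=1$ and $\tau(1+\frak{p})=1$ by tame ramification of $\tau$. The integrand is therefore the characteristic function of the support, and the integral collapses to
\[
\int_{1+\frak{p}}|a|^{-1}d^{\times}a\int_{\frak{p}}dx\int_{\frak{p}^{l-2}}dr=\vol^{\times}(1+\frak{p})\cdot\vol(\frak{p})^{l-1},
\]
which is the claim. The main subtlety is the clean elimination of branch~\eqref{it:support 2}; the remaining steps are routine bookkeeping with the explicit matrices supplied by the lemma.
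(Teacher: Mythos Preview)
Your argument is correct and follows essentially the same route as the paper: write the $dh$-integral over $\overline{B}_{\SO_3}$, use the support of $f_s$ to kill case~\eqref{it:support 2} of Lemma~\ref{support}, and observe that on the remaining domain both $W_0$ and $f_s$ are identically $1$. One tiny imprecision: the matrix ${}^{\iota}(rw^{l,1}b(w^{l,1})^{-1})$ is lower \emph{triangular} with diagonal $(a,1,\ldots,1,a^{-1})$ rather than unitriangular, but since $a\in 1+\mathfrak{p}$ it still lies in $I^+$ and $\chi$ ignores the diagonal, so your conclusion stands.
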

\begin{proof}
We may write the $dh$-integral of $\Psi(W^{\iota},f_s)$ over $\overline{B}_{\SO_3}$, then
\begin{align*}
\Psi(W^{\iota},f_s) = \int_{\overline{B}_{\SO_3}} (\int_{R^{l,1}}
W_0({}^{\iota}(rw^{l,1} b (w^{l,1})^{-1}))f_s(b,1) \,dr\,db.
\end{align*}
For $b$ as in Lemma \ref{support} (2), $f_s(b,1) = 0$. Hence
we are in part~\eqref{it:support 1} of Lemma~\ref{support}, and we conclude that the integrand vanishes unless
$a \in 1 + \mathfrak{p}$, $|x|<1$ and $r\in \mathfrak{p}^{l-2}$. In this case the integrand is identically $1$
(since $\tau$ is tamely ramified) and the result follows.
\end{proof}

We turn to compute $\Psi^*(W^{\iota}, f_s)$. We start with computing $M(\tau,s)f_s$ on the support of $W$, in the next two lemmas.
\begin{lem}\label{lem:intertwining operator when a is in 1+p}
Assume $\tau$ is quadratic, $a \in 1 + \mathfrak{p}$ and $|x|<1$. Then
\begin{align*}
M(\tau,s)f_s(b,1)=|\gamma|^s\tau(\gamma)(q-1)|2|^{1-2s}\frac{q^{1/2-2s}}{(1-q^{1-2s})}.
\end{align*}
\end{lem}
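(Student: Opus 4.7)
The plan is to compute $M(\tau,s)f_s(b,1)=\int_F f_s(w_1 u(y)b,-1)\,dy$ directly by Iwasawa-decomposing $w_1 u(y)b$ and matching against the support condition on $f_s$.

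I would first parametrize $U_{\SO_3}$ as $u(y)$ with $u(y)_{1,2}=y$ (so $du=dy$); the $\SO_3$-constraint with respect to $J_{3,\gamma/2}$ then forces $u(y)_{2,3}=-2y/\gamma$ and $u(y)_{1,3}=-y^2/\gamma$, and analogously for $\bar u(c)$. For $y\neq 0$, the equation for killing the $(3,1)$-entry of $w_1 u(y)\cdot\bar u(c)$ reads $(1+yc/2)^2=0$, giving $c=-2/y$; a direct matrix multiplication then yields the Iwasawa-type decomposition
\begin{align*}
w_1 u(y)=u(\gamma/y)\cdot\diag(-\gamma/y^2,1,-y^2/\gamma)\cdot\bar u(2/y).
\end{align*}
Pushing $\bar u(2/y)$ past $b=\diag(a,1,a^{-1})\bar u(x)$ via $\bar u(c)\diag(a,1,a^{-1})=\diag(a,1,a^{-1})\bar u(ca)$, using the abelian property $\bar u(c_1)\bar u(c_2)=\bar u(c_1+c_2)$, and then commuting the upper unipotent factor through the torus once more, produces
\begin{align*}
w_1 u(y)b=\diag(m',1,m'^{-1})\cdot u(-y/a)\cdot\bar u(2a/y+x),\qquad m'=-\gamma a/y^2.
\end{align*}

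The central step is the support analysis. Since $a\in 1+\mathfrak{p}$ and $x\in\mathfrak{p}$, the factor $\bar u(2a/y+x)$ lies in $I_{\SO_3}^+$ precisely when $|2/y|<1$, that is, $|y|>|2|$; here it is essential that $|\gamma|=|4|$, which holds because $\alpha=-\gamma/4\in\kappa^\times$. For $0<|y|\leq|2|$ I would verify that $u(y)$ has integral entries (the binding constraint being $|u(y)_{2,3}|=|y|/|2|\leq 1$), so $u(y)b$ lies in the Iwahori of $\SO_3$ while $w_1$ represents the nontrivial Weyl coset; hence $w_1 u(y)b$ lies outside $B_{\SO_3}\cdot I_{\SO_3}^+$ (using $B_{\SO_3}I_{\SO_3}^+\cap K_{\SO_3}=I_{\SO_3}$ by Iwahori factorization), and the integrand vanishes. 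For $y$ in the support $|y|>|2|$, the definition of $f_s$ together with $\tau$ quadratic ($\tau^2=1$) and tamely ramified ($\tau(a)=1$) yields
\begin{align*}
f_s(w_1 u(y)b,-1)=|m'|^s\tau(-m')=|\gamma|^s|y|^{-2s}\tau(\gamma).
\end{align*}

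Finally, decomposing $\{|y|>|2|\}$ into annuli and summing the geometric series (with $\vol(\{|y|=q^k\})=(q-1)q^{k-1/2}$ and lower index $k=1-v(2)$) gives
\begin{align*}
\int_{|y|>|2|}|y|^{-2s}\,dy=(q-1)|2|^{1-2s}\frac{q^{1/2-2s}}{1-q^{1-2s}},
\end{align*}
which combines with $|\gamma|^s\tau(\gamma)$ to produce the claimed formula. The main obstacle is correctly tracking the factor $|2|^{1-2s}$: it arises entirely from the cutoff $|y|>|2|$ rather than the naive $|y|>1$, a distinction invisible when $p\neq 2$ but essential in residue characteristic $2$. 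Pinning it down hinges on the normalization $|\gamma|=|4|$ coming from the theory of affine generic characters, which tightens the range where $u(y)$ has integral entries and, correspondingly, where $w_1 u(y)b$ fails to lie in the support of $f_s$.
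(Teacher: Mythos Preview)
Your proof is correct and follows essentially the same approach as the paper: both compute $M(\tau,s)f_s(b,1)$ by decomposing $w_1 u$ into an upper unipotent times a lower triangular factor, read off the torus contribution $|\gamma|^s\tau(\gamma)|v|^{-2s}$, identify the support condition $|v|>|2|$ (the paper obtains this as $2v^{-1}+x\in\mathfrak{p}$ from the last row of $\bar u(2v^{-1}+x)$, while you argue the complementary range via the Iwahori--Bruhat coset of $w_1$), and then sum the resulting geometric series. The only cosmetic differences are that the paper sets $a=1$ at the outset and performs the substitution $v\mapsto 2v$ before summing, whereas you keep $a$ and sum directly from $k=1-v(2)$.
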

\begin{proof}
Since $\tau$ is tamely ramified, we can already assume $a=1$.
Write
\begin{align}\label{u}
u = \begin{pmatrix}
1 &  v & -\gamma^{-1} v^2\\
 & 1 & -2\gamma^{-1} v\\
 &  & 1
\end{pmatrix}
\end{align}
and we can already assume $v\ne0$ for the computation of $M(\tau,s)$ (the measure of a singleton is zero).
Then
\begin{align*}
w_1 u=
\begin{pmatrix}
1 & \gamma v^{-1} & -\gamma v^{-2}\\&1&-2v^{-1}\\&&1
\end{pmatrix}
\begin{pmatrix}
-\gamma v^{-2} \\2v^{-1}&1\\1&v&-\gamma^{-1}v^2
\end{pmatrix},
\end{align*}
and because $f_s$ is left-invariant under $U_{\SO_3}$,
\begin{align*}
M(\tau,s) f_s(b, 1)  &= \int_{F^{\times}} f_s (
\left(\begin{smallmatrix}
-\gamma  v^{-2} \\2v^{-1}&1\\1&v&-\gamma^{-1} v^2
\end{smallmatrix}\right)
\left(\begin{smallmatrix}
1 &  & \\
x & 1 & \\
-\tfrac\gamma4x^2 & -\tfrac\gamma2x & 1
\end{smallmatrix}\right), -1) \,dv
\\&= |\gamma|^s\tau(\gamma)\int_{F^{\times}}
\tau(v^{-2})|v|^{-2s}f_s(
\begin{pmatrix}
1\\2v^{-1}+x&1\\-\tfrac\gamma4(2v^{-1}+x)^2&-\tfrac\gamma2(2v^{-1}+x)&1
\end{pmatrix}, 1) \,dv.
\end{align*}
Since $f_s$ is supported in $B_{\SO_3}I_{\SO_3}^{+}$, considering the entries in the last row we deduce $2v^{-1} + x \in \mathfrak{p}$, and because
$|x|<1$, we find that
$2 v^{-1}\in \mathfrak{p}$. We obtain
\begin{align*}
|\gamma|^s\tau(\gamma)\int_{\{v\in F^{\times}: |2 v^{-1}| < 1 \} }
\tau(v^{-2})|v|^{-2s} \,dv.
\end{align*}
Using $dv=(q-1)q^{-1/2}|v|d^{\times}v$ and changing $v\mapsto 2 v$, we have
\begin{align*}
(q-1)q^{-1/2} \tau(2^{-2}) |2|^{1-2s}   \int_{\{v\in F^{\times}:|v|>1\}}\tau(v^{-2})|v|^{1-2s} \,d^{\times}v.
\end{align*}
Writing $v=\varpi^{-l}o$ with $|o|=1$,
\begin{align*}
\int_{\{v\in F^{\times}:|v|>1\}}\tau(v^{-2})|v|^{1-2s} \,d^{\times}v
=\sum_{l=1}^{\infty}q^{l(1-2s)}\tau(\varpi^{2l})\int_{\mathfrak{o}^{\times}}\tau^2(o)d^{\times}o.
\end{align*}
Now if $\tau$ is not quadratic, the $d^{\times}o$-integral vanishes and the result holds. Otherwise using
$\mathrm{vol}^{\times}(\frak{o}^{\times})=1$ and $\tau(\varpi^{2l})=1$, again we obtain the result.
\end{proof}

\begin{lem}\label{lem:operatoe on second coset}
Assume $\tau$ is quadratic and $|x|=q^k$ with $k\geq0$. Then
\begin{align*}
&M(\tau,s)f_s(b,1)=|\gamma|^s\tau(\gamma)|a|^{1-s}\tau^{-1}(a)|2|^{1-2s} q^{2k(s-1)} \mathrm{vol}(\mathfrak{p}).
\end{align*}
\end{lem}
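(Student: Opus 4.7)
The plan is to parallel the argument of Lemma~\ref{lem:intertwining operator when a is in 1+p}. I would begin with the same Iwasawa decomposition $w_1 u = u' L(v)$, where $L(v)$ denotes the lower-triangular factor
\begin{align*}
L(v) = \begin{pmatrix} -\gamma v^{-2} & & \\ 2v^{-1} & 1 & \\ 1 & v & -\gamma^{-1}v^{2}\end{pmatrix}
\end{align*}
from that proof, and use the left $U_{\SO_3}$-invariance of $f_s$ to reduce $M(\tau,s)f_s(b,1)$ to $\int_F f_s(L(v)b,-1)\,dv$.

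Next I would compute $X := L(v)b$ explicitly: it is again lower-triangular, with $(1,1)$-entry $-a\gamma v^{-2}$. Setting $m := -a\gamma v^{-2}$ and $Y := \diag(m^{-1},1,m)X$, the support condition $X \in B_{\SO_3} I_{\SO_3}^+$ becomes the statement that the three subdiagonal entries of $Y$ lie in $\mathfrak{p}$. A short calculation, abbreviating $c := a + vx/2$, identifies these entries as $2c/v = 2a/v + x$ at position $(2,1)$, $-\gamma c/v$ at $(3,2)$, and $-\gamma c^{2}/v^{2}$ at $(3,1)$. Using $|\gamma| = |2|^{2}$ (which follows from $|\gamma/4|=1$), I expect the $(2,1)$-entry to furnish the binding constraint $|2a/v + x| < 1$, with the other two conditions then automatic.

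On this support, using $\tau^{2}=1$ and hence $\tau(v^{-2})=1$, one obtains $f_s(X,-1) = |a\gamma|^{s}|v|^{-2s}\tau(a)\tau(\gamma)$, so the problem reduces to evaluating
\begin{align*}
\tau(a)\tau(\gamma)|a\gamma|^{s}\int_{\{v\,:\,|2a/v+x|<1\}} |v|^{-2s}\,dv.
\end{align*}
I would then perform the substitution $v = -(2a/x)(1+w)$, which converts $2a/v+x$ into $xw/(1+w)$; since $k \geq 0$ forces $|1+w|=1$ on the relevant region, the domain collapses to $w \in \mathfrak{p}^{k+1}$, on which $|v| = |2a|q^{-k}$. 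This routine geometric integration yields $\mathrm{vol}(\mathfrak{p})\cdot|2a|^{1-2s}q^{2k(s-1)}$. Multiplying by the prefactors and writing $\tau(a)=\tau^{-1}(a)$ produces the stated formula.

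The main obstacle is the three-way comparison of the subdiagonal entries of $Y$ in the presence of the distinct scales $|\gamma|$ and $|2|$: one must verify that the $(2,1)$ constraint is the strictest, so that it alone governs the integration region. Once that is established, the remaining calculation is essentially the same elementary geometric integral as in Lemma~\ref{lem:intertwining operator when a is in 1+p}.
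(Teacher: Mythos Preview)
Your argument is correct and follows essentially the same route as the paper; the only organizational difference is that the paper first extracts the factor $|a|^{1-s}\tau^{-1}(a)$ by using $M(\tau,s)f_s\in V(\tau^{-1},1-s)$ to pull $\diag(a,1,a^{-1})$ out of $b$ on the left, thereby reducing immediately to the integral already analyzed in Lemma~\ref{lem:intertwining operator when a is in 1+p} (with lower-unipotent parameter $2v^{-1}+x$ rather than your $2av^{-1}+x$). Your ``main obstacle'' dissolves once you note that $Y$, being lower unipotent in $\SO_3$, is determined by the single entry $Y_{2,1}$, and membership in $I_{\SO_3}^+$ is precisely $Y_{2,1}\in\mathfrak{p}$ (as recorded just before Lemma~\ref{support}); the other two subdiagonal conditions are automatic.
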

\begin{proof}
As in the proof of Lemma~\ref{lem:intertwining operator when a is in 1+p} and with the same notation (but for any $a$),
\begin{align*}
&M(\tau,s) f_s(b, 1) \\&=|\gamma|^s\tau(\gamma)|a|^{1-s}\tau^{-1}(a)\int_{F^{\times}}
\tau(v^{-2})|v|^{-2s}f_s(
\begin{pmatrix}
1\\2v^{-1}+x&1\\-\tfrac\gamma4(2v^{-1}+x)^2&-\tfrac\gamma2(2v^{-1}+x)&1
\end{pmatrix}, 1) \,dv.
\end{align*}
Since $\tau$ is quadratic, $\tau(v^{-2})=1$. Changing $v\mapsto 2v$ we obtain
\begin{align*}
&|\gamma|^s\tau(\gamma)|a|^{1-s}\tau^{-1}(a)|2|^{1-2s}\int_{F^{\times}}
|v|^{-2s}f_s(
\begin{pmatrix}
1\\v^{-1}+x&1\\-\tfrac\gamma4(v^{-1}+x)^2&-\tfrac\gamma2(v^{-1}+x)&1
\end{pmatrix}, 1) \,dv.
\end{align*}
Again the integrand vanishes unless $v^{-1}+x\in \mathfrak{p}$, or $v^{-1}\in-x(1+ x^{-1}\mathfrak{p})$. Since
$|x|\geq1$, the additive group $x^{-1}\mathfrak{p}$ is contained in $\mathfrak{p}$. Thus
$|v|=q^{-k}$ and the $dv$-integral equals $q^{2k(s-1)} \mathrm{vol}(\mathfrak{p})$. The result follows.
\end{proof}

\begin{lem}\label{W on other coset}
Assume ${}^{\iota}(rw^{l,1}b(w^{l,1})^{-1})\in Ug_{\chi}ZI^+$. Then
\begin{align*}
\int_{R^{l,1}} W_0^{\iota}(r w^{l,1} b (w^{l,1})^{-1}) \,dr = \chi(g_{\chi})\mathrm{vol}(\mathfrak{p})^{l-2}.
\end{align*}
\end{lem}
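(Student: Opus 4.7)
The plan is to reduce the integration using the support conditions, and then to evaluate $W_0^{\iota}(M)$ pointwise by producing an explicit decomposition $M := {}^{\iota}(rw^{l,1}b(w^{l,1})^{-1}) = u g_{\chi} z y$. First, I would observe that Lemma~\ref{support}(2) forces $r \in \mathfrak{p}^{l-2}$ whenever $M$ lies in $Ug_{\chi}ZI^+$, while part~(1) is excluded on this locus (its hypothesis $a \in 1 + \mathfrak{p}$ is incompatible with $|a| = q^{2k+1}$). Hence $W_0^{\iota}(M)$ is supported on $r \in \mathfrak{p}^{l-2}$, and the $r$-integral collapses to the volume $\mathrm{vol}(\mathfrak{p})^{l-2}$ multiplied by the pointwise value $W_0^{\iota}(M)$; it suffices to show this value equals $\chi(g_{\chi})$.

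Next, I would extract $z$ and the first row of $y$ from the last row of $M$. Since $u$ is upper unipotent with trivial last row, and the only nonzero entry of $g_{\chi}$ in row $2l$ is $-\varpi$ in column $1$, the last row of $M$ equals $-\varpi z$ times the first row of $y$. Reading this from the explicit form of $M$ in the proof of Lemma~\ref{support} gives $(zy)_{1,1} = \gamma x^2/(4\varpi a)$, which lies in $1 + \mathfrak{p}$ by Lemma~\ref{support}(2). As $y_{1,1} \in 1 + \mathfrak{p}$ for $y \in I^+$, this forces $z = I$, and one reads off the rest of the first row of $y$ similarly.

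The main step is then to find an upper unipotent $u$ so that $y = g_{\chi}^{-1} u^{-1} M \in I^+$. Imposing that the diagonal of $y$ lies in $1 + \mathfrak{p}$ pins down the critical entries of $u^{-1}$ (in particular $(u^{-1})_{1, 2l}$ controls $y_{1,1}$, and entries in the Weyl-active rows $l$ and $l+1$ control the middle diagonal) up to elements of the stabilizer $U \cap g_{\chi} I^+ g_{\chi}^{-1}$. With an explicit $u$ at hand, the computation reduces to evaluating $\psi_{-\gamma/4}^{-1}(u)$ on the Whittaker-character entries $u_{i,i+1}$, $u_{l-1,l}$, $u_{l-1,l+1}$ and $\chi(y)$ on the simple-affine-root entries $y_{i,i+1}$, $y_{l-1,l+1}$, $y_{2l-1,1}/\varpi$, and verifying $\psi_{-\gamma/4}^{-1}(u)\,\chi(y) = 1$. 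The main obstacle is this cancellation: it should follow from the matching of corresponding entries under the $g_{\chi}$-action, the hypothesis $\gamma x^2/(4a) \in \varpi(1 + \mathfrak{p})$ (so that the leading correction falls into $\ker\psi$), and $r \in \mathfrak{p}^{l-2}$ (making the $r'$-contributions trivial modulo $\mathfrak{p}$). Once established, $W_0^{\iota}(M) = \psi_{-\gamma/4}^{-1}(u)\,\chi(g_{\chi})\,\chi(y) = \chi(g_{\chi})$ and the integration yields the claim.
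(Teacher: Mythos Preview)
Your outline is sound and would work, but it takes a different route from the paper. The paper does not attempt a direct decomposition $M=u g_\chi z y$ with $u$ depending on both $r$ and $b$. Instead it exploits a separation of variables: it writes down an explicit $u$ (supported in the $\SO_4$-block, depending only on $a,x$) for which $g_\chi w^{l,1} u b (w^{l,1})^{-1}=\iota^{-1}y\iota^{-1}$ with $y\in I^+$ and $\chi(y)=1$, and then absorbs the $r$-dependence via the commutator $v_{r,u}=r\,(w^{l,1}u^{-1}(w^{l,1})^{-1})\,r^{-1}\in U_{\SO_{2l}}$, checking $\psi(v_{r,u})=1$. This reduces everything to $W_0(rg_\chi)$, and a final conjugation $rg_\chi=g_\chi({}^{g_\chi}r)$ with $\chi({}^{g_\chi}r)=1$ finishes. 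The advantage of the paper's factorization is that the ``main obstacle'' you flag splits into two independent and easy verifications ($\psi(v_{r,u})=1$ and $\chi({}^{g_\chi}r)=1$), whereas your direct approach forces you to track $r$ through the entire decomposition and verify one combined cancellation.

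One small slip in your heuristics: the entry $(u^{-1})_{1,2l}$ does not control $y_{1,1}$. Since $u^{-1}$ is upper unipotent, the last row of $u^{-1}M$ equals the last row of $M$, so $y_{1,1}=-\varpi^{-1}M_{2l,1}=\gamma x^2/(4\varpi a)$ is already in $1+\mathfrak{p}$ independently of $u$; rather, $(u^{-1})_{1,2l}$ governs $y_{2l,2l}$ through the first row of $u^{-1}M$. This does not affect the validity of your strategy, but it suggests that carrying out your direct decomposition carefully will essentially recover the paper's $u$ (conjugated by $\iota$ and $w^{l,1}$) together with the $r$-pieces folded in.
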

\begin{proof}
Write the image of $b$ in $\SO_{2l}$ as in \eqref{b in SO 2l} and consider
\begin{align*}
u=\begin{pmatrix}
I_{l-2}\\
&1&-4ax^{-1}&-\tfrac1\gamma ax^{-1}&-\tfrac4\gamma a^2x^{-2}\\
&&1&&\tfrac1\gamma ax^{-1}\\
&&&1&4ax^{-1}\\
&&&&1\\
&&&&&I_{l-2}
\end{pmatrix}.
\end{align*}
Then
\begin{align*}
u b=\begin{pmatrix}
I_{l-2}\\&&&&-\tfrac4\gamma ax^{-2}\\&&&-\tfrac1{4\gamma} &\tfrac1\gamma x^{-1}\\&&-4\gamma &0&4x^{-1}\\
&-\tfrac\gamma4x^2a^{-1}&-\gamma xa^{-1}&-\tfrac14xa^{-1}&a^{-1}\\&&&&&I_{l-2}\end{pmatrix}
\end{align*}
and $g_{\chi}w^{l,1}ub(w^{l,1})^{-1}=\iota^{-1}y\iota^{-1}$ where
\begin{align*}
y=\begin{pmatrix}
\tfrac\gamma4x^2a^{-1}\varpi^{-1}&0&\tfrac\gamma4 xa^{-1}\varpi^{-1}&xa^{-1}\varpi^{-1}&0&-a^{-1}\varpi^{-1}\\
0&I_{l-2}&0&0&0&0\\
0&0&1&0&0&-\tfrac4\gamma x^{-1}\\0&0&0&1&0&-x^{-1}\\
0&0&0&0&I_{l-2}&0\\
0&0&0&0&0&\tfrac4\gamma ax^{-2}\varpi
\end{pmatrix}.
\end{align*}
By Lemma~\ref{support}~\eqref{it:support 2}, $|a|=q^{2k+1}$, $|x|=q^k$ for some $k\geq0$ and $\tfrac{\gamma}4x^2a^{-1}\in  \varpi \cdot (1 + \mathfrak{p})$, and because $|\tfrac4\gamma|=1$, we deduce $y\in I^+$. Also note that $g_{\chi}=g_{\chi}^{-1}$ and $g_{\chi}\iota^{-1}=\iota g_{\chi}$. Then
\begin{align*}
&b(w^{l,1})^{-1}=u^{-1}(w^{l,1})^{-1} g_{\chi} \iota^{-1}y\iota^{-1}=u^{-1}(w^{l,1})^{-1} \iota g_{\chi} y\iota^{-1},\\
&W_0^{\iota}(r w^{l,1} b (w^{l,1})^{-1}) = W_0^{\iota}(r w^{l,1}u^{-1}(w^{l,1})^{-1}\iota g_{\chi}y\iota^{-1}).
\end{align*}

Next we see that
\begin{align*}
r (w^{l,1} u^{-1} (w^{l,1})^{-1}) r^{-1} =
v_{r,u}=\begin{pmatrix}
   1&&4ax^{-1}&\tfrac1\gamma ax^{-1}&*&*\\&I_{l-2}&4ax^{-1}r&\tfrac1\gamma ax^{-1}r&*&*\\
&&1&&*&-\tfrac1\gamma ax^{-1}\\&&&1&*&-4ax^{-1}\\&&&&I_{l-2}&\\
&&&&&1\end{pmatrix}\in U_{\SO_{2l}},
\end{align*}
where the remaining coordinates are determined by $a,x,r$ and the form defining $\SO_{2l}$.
Since
\begin{align*}
\psi(v_{r,u})=\psi(\tfrac14(4ax^{-1}r)-\gamma(\tfrac1\gamma ax^{-1}r))=1,
\end{align*}
$W_0^{\iota}$ is right-invariant under $\iota I^+\iota^{-1}$ and $\iota$ commutes with $r$,
\begin{align*}
&W_0^{\iota}(r w^{l,1} b (w^{l,1})^{-1}) = W_0^{\iota}(v_{r,u}r\iota g_{\chi}\iota^{-1})=
W_0^{\iota}(r\iota g_{\chi}\iota^{-1})=W_0(rg_{\chi}).
\end{align*}
Now, we have $W_0(r g_{\chi}) = W_0(g_{\chi} ({}^{g_{\chi}}r))$, ${}^{g_{\chi}}r\in I^+$ and $\chi({}^{g_{\chi}}r)=1$, hence $W_0(r g_{\chi}) = \chi(g_{\chi})$.
\end{proof}
\begin{cor}\label{cor:psi *}$\Psi(W^{\iota},M(\tau,s)f_s)$ equals
\begin{align*}
\Psi(W^{\iota},f_s)(q-1)|2|^{1-2s} |\gamma|^s \left(  \tau(\gamma)\frac{q^{1/2-2s}}{(1-q^{1-2s})}
+\chi(g_{\chi})\tau(\varpi) q^{-1/2-s}/(1-q^{-1})\right).
\end{align*}
\end{cor}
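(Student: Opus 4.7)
The plan is to expand $\Psi(W^{\iota}, M(\tau,s)f_s)$ as an integral over the open Bruhat cell $\overline{B}_{\SO_3}\subset U_{\SO_3}\backslash\SO_3$, using the measure $db=|a|^{-1}d^{\times}a\,dx$, and split according to the coset decomposition of the support of $W_0$. By Lemma~\ref{support}, for each $b\in\overline{B}_{\SO_3}$ and $r\in R^{l,1}$ the element ${}^{\iota}(rw^{l,1}b(w^{l,1})^{-1})$ either lies in $UZI^+$ (case 1, with $a\in 1+\mathfrak{p}$, $|x|<1$, $r\in\mathfrak{p}^{l-2}$), or in $Ug_\chi Z I^+$ (case 2, with $|a|=q^{2k+1}$, $|x|=q^k$, $\tfrac{\gamma}{4}x^2a^{-1}\in\varpi(1+\mathfrak{p})$ for some $k\geq 0$), or else $W_0^{\iota}$ vanishes. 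So we separate $\Psi(W^{\iota},M(\tau,s)f_s)$ into the corresponding two contributions.

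On the case 1 domain, Lemma~\ref{lem:intertwining operator when a is in 1+p} tells us that $M(\tau,s)f_s(b,1)$ is equal to the constant
\[
|\gamma|^{s}\tau(\gamma)(q-1)|2|^{1-2s}\frac{q^{1/2-2s}}{1-q^{1-2s}},
\]
independent of $(a,x)$. Pulling this constant out, the remaining integral of $W_0^{\iota}({}^{\iota}(rw^{l,1}b(w^{l,1})^{-1}))$ over the case 1 domain is exactly the quantity computed in Corollary~\ref{cor:psi W f}, namely $\Psi(W^{\iota},f_s)=\mathrm{vol}^{\times}(1+\mathfrak{p})\mathrm{vol}(\mathfrak{p})^{l-1}$. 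This produces the first summand in the claimed formula.

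For the case 2 domain, the $dr$-integral of $W_0^{\iota}$ collapses by Lemma~\ref{W on other coset} to $\chi(g_{\chi})\mathrm{vol}(\mathfrak{p})^{l-2}$, and by Lemma~\ref{lem:operatoe on second coset} the factor $M(\tau,s)f_s(b,1)$ equals $|\gamma|^{s}\tau(\gamma)|a|^{1-s}\tau^{-1}(a)|2|^{1-2s}q^{2k(s-1)}\mathrm{vol}(\mathfrak{p})$. The remaining integration is over the level-$k$ slice: $x=\varpi^{-k}u$ with $u\in\mathfrak{o}^{\times}$, and $a\in\tfrac{\gamma u^2 \varpi^{-2k-1}}{4}(1+\mathfrak{p})$. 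On this slice, $|a|^{-1}d^{\times}a\,dx$ contributes $q^{-(2k+1)}\mathrm{vol}^{\times}(1+\mathfrak{p})\cdot q^{k}q^{-1/2}(q-1)$. Because $\tau$ is tame and quadratic, $\tau^{-1}(a)$ depends only on the leading factor: $\tau^{-1}(a)=\tau(\gamma)\tau^{2}(u)\tau(\varpi)^{2k+1}\tau(4)=\tau(\gamma)\tau(\varpi)$. Multiplying everything out, the powers of $q$ combine to $q^{2k(s-1)+(2k+1)(1-s)-(2k+1)+k}=q^{-s-k}$, and summing the resulting geometric series in $k\geq 0$ yields the factor $q^{-s}/(1-q^{-1})$. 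After using $\tau^{2}(\gamma)=1$ and regrouping $\mathrm{vol}(\mathfrak{p})^{l-1}\mathrm{vol}^{\times}(1+\mathfrak{p})=\Psi(W^{\iota},f_s)$, this produces exactly the second summand.

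The bulk of the calculation is bookkeeping; the one place to be careful is the case 2 domain. One must correctly parametrize the joint constraint on $(a,x)$ (for each $k$, a unit circle in $x$ and a single $(1+\mathfrak{p})$-coset in $a$ determined by $x^2$), verify that the Haar measures contribute the right powers of $q$ and $q^{1/2}$, and confirm that the $k$-independent factor $\tau^{-1}(a)$ collapses to $\tau(\gamma)\tau(\varpi)$ using quadraticity. Once this is done, the exponent $-s-k$ appears cleanly and the geometric sum finishes the computation.
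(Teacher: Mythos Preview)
Your proof is correct and follows essentially the same approach as the paper: both write the $dh$-integral over $\overline{B}_{\SO_3}$, split into the two cases of Lemma~\ref{support}, invoke Lemmas~\ref{lem:intertwining operator when a is in 1+p}--\ref{W on other coset} for the integrand, and then compute the measure of the case~2 domain and sum the resulting geometric series. Your parametrization $x=\varpi^{-k}u$ with $a$ in a single $(1+\mathfrak{p})$-coset is just a slightly more explicit version of the paper's measure computation, and your exponent bookkeeping $q^{2k(s-1)+(2k+1)(1-s)-(2k+1)+k}=q^{-s-k}$ matches the paper's observation that the integrand collapses to a $k$-independent factor times the measure contribution.
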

\begin{proof}
As in the proof of Corollary~\ref{cor:psi W f}, we write the $dh$-integral over $\overline{B}_{\SO_3}$ and using $db$. According to the support
of $W^{\iota}$ and by Lemma~\ref{support}, $\Psi(W^{\iota},M(\tau,s)f_s)$ is the sum of two integrals, each corresponding to one of the cases of the lemma. The first summand is
\begin{align*}
&\int_{a\in1+\mathfrak{p}}
\int_{x\in\mathfrak{p}}
\int_{r\in\mathfrak{p}^{l-2}}W_0^{\iota}(rw^{l,1}b(w^{l,1})^{-1})[M(\tau,s) f_s](b,1)\,dr\,da\,dx
\\&=  \mathrm{vol}^{\times}(1+\mathfrak{p})  \mathrm{vol}(\mathfrak{p})^{l-1} |\gamma|^s\tau(\gamma) (q-1)  |2|^{1-2s}      \frac{q^{1/2-2s}}{(1-q^{1-2s})},
\end{align*}
by Lemma~\ref{lem:intertwining operator when a is in 1+p}.

The second summand is an integral over $(a,x)$ such that
\begin{align*}
|a|=q^{2k+1},\qquad |x|=q^k, \qquad \tfrac{\gamma}4x^2a^{-1}\in  \varpi \cdot (1 + \mathfrak{p}),\qquad k\geq0.
\end{align*}
For each such elements, by Lemma~\ref{lem:operatoe on second coset} and Lemma~\ref{W on other coset} the integrand equals
\begin{align*}
|\gamma|^s\tau(\varpi)|2|^{1-2s} q^{1-s} \chi(g_{\chi})\mathrm{vol}(\mathfrak{p})^{l-1},
\end{align*}

where we also used $\tau^{-1}(a)=\tau(\gamma)\tau(\varpi)$ ($\tau$ is quadratic and tamely ramified). It remains to compute the measure $db$ in the integral, and
because $db=|a|^{-1}da\, dx$, $|xa^{-1}|=q^{-k-1}$ and
$\mathrm{vol}^{\times}(\tfrac{\gamma}4x^{2}\varpi^{-1}(1+\mathfrak{p}))=\mathrm{vol}^{\times}(1+\mathfrak{p})$, we have
\begin{align*}
q^{-1}\mathrm{vol}(\mathfrak{o}^{\times})\mathrm{vol}^{\times}(1+\mathfrak{p}))\sum_{k=0}^{\infty}q^{-k}
=(q-1)q^{-3/2}\mathrm{vol}^{\times}(1+\mathfrak{p})\frac1{1-q^{-1}}.
\end{align*}
Thus
\begin{align*}
\Psi(W^{\iota},M(\tau,s)f_s)=&\mathrm{vol}^{\times}(1+\mathfrak{p})\mathrm{vol}(\mathfrak{p})^{l-1}(q-1)|2|^{1-2s} |\gamma|^s
\\&\left(  \tau(\gamma)\frac{q^{1/2-2s}}{(1-q^{1-2s})}
+\chi(g_{\chi})\tau(\varpi) q^{-1/2-s}/(1-q^{-1})\right),
\end{align*}
since $\tau$ is quadratic.  The formula follows when we plug Corollary~\ref{cor:psi W f} into this identity.
\end{proof}

\begin{cor}\label{corollary:gamma for tamely ramified}
For any quadratic tamely ramified character $\tau$ of $F^*$,
\begin{align*}
\gamma(s,\pi\times\tau,\psi)=&
\pi(-I_{2l})\tau(-1)^{l}\tau(\gamma)(q-1)
\gamma^{\mathrm{Tate}}(2s-1,\tau^2,\psi)
\\&\times\left(  \tau(\gamma)\frac{q^{1/2-2s}}{(1-q^{1-2s})}
+\chi(g_{\chi})\tau(\varpi) q^{-1/2-s}/(1-q^{-1})\right).
\end{align*}
\end{cor}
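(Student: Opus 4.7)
The plan is to simply combine the defining functional equation \eqref{gamma def} with the computations already packaged in Proposition~\ref{proposition:C factor} and Corollaries~\ref{cor:psi W f} and \ref{cor:psi *}. First, I would use the observation from the end of \S\ref{simple supercuspidals} that $\gamma(s,\pi\times\tau,\psi)=\gamma(s,\pi^{\iota}\times\tau,\psi)$, so that I may evaluate both sides of \eqref{gamma def} at the pair $(W^{\iota},f_s)$. Since $\Psi(W^{\iota},f_s)$ is a nonzero constant by Corollary~\ref{cor:psi W f}, I can then solve for $\gamma(s,\pi\times\tau,\psi)$ as
\begin{align*}
\gamma(s,\pi\times\tau,\psi)=\pi(-I_{2l})\tau(-1)^{l}\bigl(\tau^2(2)|2|^{2s-1}\tau^{-2}(\gamma)|\gamma|^{-2s+1}\bigr)\,\frac{\Psi^*(W^{\iota},f_s)}{\Psi(W^{\iota},f_s)}.
\end{align*}
Because $\tau$ is quadratic, $\tau^2\equiv1$ and $\tau^{-1}=\tau$, so the factors $\tau^2(2)$ and $\tau^{-2}(\gamma)$ drop out immediately.

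Next, I would unfold $\Psi^*(W^{\iota},f_s)=C(s,\tau,\psi)\,\Psi(W^{\iota},M(\tau,s)f_s)$ and substitute the formula for $C(s,\tau,\psi)$ from Proposition~\ref{proposition:C factor}, again using $\tau^4\equiv1$ and $\tau^{-1}(\gamma)=\tau(\gamma)$ to obtain
\begin{align*}
C(s,\tau,\psi)=|2|^{4s}\tau(\gamma)|\gamma|^{-s-1}\gamma^{\mathrm{Tate}}(2s-1,\tau^2,\psi).
\end{align*}
Then I would plug in the ratio coming from Corollaries~\ref{cor:psi W f} and~\ref{cor:psi *}:
\begin{align*}
\frac{\Psi(W^{\iota},M(\tau,s)f_s)}{\Psi(W^{\iota},f_s)}=(q-1)|2|^{1-2s}|\gamma|^s\!\left(\tau(\gamma)\frac{q^{1/2-2s}}{1-q^{1-2s}}+\chi(g_{\chi})\tau(\varpi)\frac{q^{-1/2-s}}{1-q^{-1}}\right).
\end{align*}

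What remains is to check that all the absolute-value factors cancel. Collecting the $|2|$-exponents gives $|2|^{(2s-1)+4s+(1-2s)}=|2|^{4s}$, and collecting the $|\gamma|$-exponents gives $|\gamma|^{(-2s+1)+(-s-1)+s}=|\gamma|^{-2s}$. Since the parameters of $\chi$ force $|\gamma/4|=1$, we have $|\gamma|=|2|^2$, so $|\gamma|^{-2s}=|2|^{-4s}$, and the $|2|$ and $|\gamma|$ contributions exactly cancel. Assembling the remaining factors yields precisely the expression in the statement. The only real obstacle is careful bookkeeping with the characters and absolute values, together with the observation that the identity $|\gamma|=|2|^2$ is exactly what makes the final formula free of such factors; all the genuine analytic work has been done in the preceding corollaries.
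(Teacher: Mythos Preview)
Your proposal is correct and follows exactly the same approach as the paper's proof, which simply says to combine \eqref{gamma def}, Corollary~\ref{cor:psi W f}, Corollary~\ref{cor:psi *}, and \eqref{eq:C factor}, using that $\tau^2=1$ and $|\gamma|=|4|$. Your explicit bookkeeping of the $|2|$- and $|\gamma|$-exponents is a faithful expansion of precisely that outline.
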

\begin{proof}
Use \eqref{gamma def}, Corollary~\ref{cor:psi W f}, Corollary~\ref{cor:psi *}, \eqref{eq:C factor}, and note that $\tau^2=1$ and $|\gamma|=|4|$.
\end{proof}
Theorem~\ref{mainmain} now follows immediately from Corollary~\ref{corollary:gamma for tamely ramified}.

\section{The Langlands parameter}\label{The Langlands parameter}

In this section we discuss the Langlands parameter for $\pi$.  Recall that $\pi = \pi_{\alpha}^{\omega}$ is a simple supercuspidal representation of $\SO_{2l}$, corresponding to the character $\chi=\chi_{\alpha}^{\omega}$. Let $\varphi=\varphi_{\pi}$ be the Langlands parameter of $\pi$.

First assume $p \neq 2$. It is then expected that
\begin{align}\label{eq:Langlands parameter p not 2}
\varphi = \varphi_1 \oplus \varphi_2 \oplus \varphi_3,
\end{align}
where $\varphi_1$ and $\varphi_2$ are $1$-dimensional, and
$\varphi_3$ is $2l-2$ dimensional and corresponds, via the local Langlands correspondence, to a simple supercuspidal representation $\Pi'$ of $\GL_{2l-2}$.

Let $\tau_1,\tau_2$ be the quadratic characters of $F^*$ such that $\gamma(s,\pi\times\tau_i,\psi)$ has a pole at $s=1$, guaranteed by
Theorem \ref{mainmain}. We let $\tau_1$ be the unramified such character, and $\tau_2$ the tamely ramified one. Without loss of generality, $\varphi_1 = \tau_1$ and $\varphi_2 = \tau_2$.
Since $\det\varphi=1$, the central character of $\Pi'$ equals
$\tau_1\tau_2$. Let $\delta$ be the coefficient of $q^{1/2-s}$ in $\gamma(s, \Pi', \psi)$.
\begin{prop}\label{proposition:delta for p not 2}
$\delta=\pi(-I_{2l}) \chi(g_{\chi}) \epsilon(s, \tau_2, \psi)^{-1}$.
\end{prop}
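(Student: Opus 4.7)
The plan is to exploit multiplicativity of the Rankin--Selberg $\gamma$-factor: under the expected decomposition $\varphi_{\pi} = \tau_1 \oplus \tau_2 \oplus \varphi_3$ with $\varphi_3 \leftrightarrow \Pi'$, one has
\begin{align*}
\gamma(s, \pi, \psi) = \gamma(s, \tau_1, \psi)\,\gamma(s, \tau_2, \psi)\,\gamma(s, \Pi', \psi).
\end{align*}
The task therefore reduces to computing $\gamma(s, \pi, \psi)$ and $\gamma(s, \tau_1, \psi)$ explicitly, dividing, and using $\gamma(s, \tau_2, \psi) = \epsilon(s, \tau_2, \psi)$ (which holds because $\tau_2$ is ramified, whence $L(s, \tau_2) = 1$).

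For $\gamma(s, \pi, \psi)$, I would apply Corollary~\ref{corollary:gamma for tamely ramified} to the trivial character $\tau = \mathbf{1}$ (which is quadratic and unramified, and hence within the scope of that corollary since its proof only used $\tau(v^{-2})=1$ together with tameness). Plugging in $\tau(\gamma) = \tau(\varpi) = 1$, the explicit value of $\gamma^{\mathrm{Tate}}(2s-1, \mathbf{1}, \psi)$ from Tate's local functional equation for our normalized measures, and the identity $(q-1)/(1-q^{-1}) = q$, produces a compact rational function in $q^{-s}$ with denominator $1-q^{2s-2}$. For $\gamma(s, \tau_1, \psi)$, Theorem~\ref{mainmain} together with $\tau_1(\gamma) = 1$ (since $v(\gamma) = 0$ when $p \neq 2$) forces $\tau_1(\varpi) = \chi(g_{\chi})$, and the standard Tate formula expresses $\gamma(s, \tau_1, \psi)$ as a rational function whose denominator is $1 - \chi(g_\chi)\, q^{s-1}$.

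The heart of the argument is an algebraic simplification showing that the ratio $\gamma(s, \pi, \psi)/\gamma(s, \tau_1, \psi)$ is the clean monomial $\pi(-I_{2l})\chi(g_\chi)\, q^{1/2-s}$. Setting $\eta = \chi(g_\chi)$ and $a = q^{-s}$, I would factor the bracket from Corollary~\ref{corollary:gamma for tamely ramified} as $(qa+\eta)(1-\eta a)/(qa)$ and the denominator $1-q^{2s-2}$ as $(qa-\eta)(qa+\eta)/(qa)^2$ using $\eta^2 = 1$; the factor $(qa+\eta)$ cancels, and the elementary identity $(a-\eta)/(1-\eta a) = -\eta$ collapses the remaining quotient to the desired monomial. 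The main obstacle is precisely this cancellation: the pole of $\gamma(s, \tau_1, \psi)$ at $q^{s-1} = \eta^{-1}$ must be exactly matched by a factor of $1-q^{2s-2}$ coming from $\gamma(s, \pi, \psi)$, and this matching is guaranteed by the identification $\tau_1(\varpi) = \chi(g_\chi)$ extracted from Theorem~\ref{mainmain}. Once the monomial identity is in hand, dividing through by $\epsilon(s, \tau_2, \psi)$ gives $\gamma(s, \Pi', \psi) = \pi(-I_{2l})\chi(g_\chi)\,\epsilon(s, \tau_2, \psi)^{-1}\, q^{1/2-s}$, from which the claimed formula for $\delta$ follows immediately.
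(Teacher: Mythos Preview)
Your proposal is correct and follows essentially the same route as the paper: both invoke the multiplicativity $\gamma(s,\pi,\psi)=\gamma(s,\tau_1,\psi)\gamma(s,\tau_2,\psi)\gamma(s,\Pi',\psi)$, evaluate $\gamma(s,\pi,\psi)$ via Corollary~\ref{corollary:gamma for tamely ramified} at $\tau=1$ together with the explicit Tate value $\gamma^{\mathrm{Tate}}(2s-1,1,\psi)=q^{2s-3/2}(1-q^{1-2s})/(1-q^{2s-2})$, use $\tau_1(\varpi)=\chi(g_\chi)$ and $\gamma(s,\tau_2,\psi)=\epsilon(s,\tau_2,\psi)$, and then simplify. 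The paper compresses your explicit factorization step into the single line ``Plugging this into \eqref{eq:first computation of gamma Pi'} we obtain\ldots'', but the underlying cancellation is exactly the one you describe.
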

\begin{proof}
By the local Langlands correspondence for general linear groups, $\delta$ is precisely the coefficient of $q^{1/2-s}$ in $\gamma(s, \varphi_1, \psi)$ (see \cite[\S~2.6]{AdrianKaplan2018}). By \eqref{eq:Langlands parameter p not 2}, the local Langlands correspondence also implies
\begin{align*}
\gamma(s,\pi,\psi)=\gamma(s, \Pi', \psi)\gamma(s, \tau_1, \psi)\gamma(s, \tau_2, \psi).
\end{align*}
Then by Corollary~\ref{corollary:gamma for tamely ramified} (with $\tau\equiv1$),
\begin{align}\label{eq:first computation of gamma Pi'}
\gamma(s, \Pi', \psi)=&
\pi(-I_{2l})(q-1)q^{2s-3/2} \frac{1-q^{1-2s}}{1-q^{2s-2}}\left(\frac{q^{1/2-2s}}{(1-q^{1-2s})}
+\chi(g_{\chi})q^{-1/2-s}/(1-q^{-1})\right)
\\&\times\gamma(s, \tau_1, \psi)^{-1}\gamma(s, \tau_2, \psi)^{-1}.\nonumber
\end{align}
Here we used $\gamma^{\mathrm{Tate}}(2s-1,1,\psi)=q^{2s-3/2} \frac{1-q^{1-2s}}{1-q^{2s-2}}$.

Write $\gamma =4\cdot u$ for some $u \in \mathfrak{o}^{\times}$, and then
$\tau_i(\gamma)=\tau_i(u)$, $i=1,2$. Because $\tau_1$ is unramified and $\tau_2$ is the unique nontrivial quadratic character of $\mathfrak{o}^{\times}$,
\begin{align*}
\tau_1(\varpi) = \chi(g_{\chi}),\qquad \tau_2(\varpi) = \chi(g_{\chi}) \tau_2(\gamma).
\end{align*}
Now by virtue of \cite[\S~23.4, \S~23.5]{BH06},
\begin{align*}
\gamma(s, \tau_1, \psi) = q^{s-1/2} \chi(g_{\chi})\frac{1-\chi(g_{\chi}) q^{-s}}{1 - \chi(g_{\chi}) q^{s-1}},\qquad
\gamma(s, \tau_2, \psi) = \epsilon(s, \tau_2, \psi).
\end{align*}
Plugging this into \eqref{eq:first computation of gamma Pi'} we obtain
\begin{align*}
\gamma(s, \Pi', \psi)=\pi(-I_{2l}) \chi(g_{\chi}) \epsilon(s, \tau_2, \psi)^{-1} q^{1/2-s},
\end{align*}
as claimed.
\end{proof}
\begin{rmk}
Because $\tau_2$ is tamely ramified and not unramified, the power of $q$ in $\epsilon(s, \tau_2, \psi)$ is zero.
\end{rmk}
The final ingredient we need is the restriction of $\varphi_3$ to the wild inertia subgroup. Unfortunately, as mentioned in the introduction, our method does not provide this information. We expect it to follow from an analogue of \cite{BHS17}, but note that only the simple supercuspidal case of \cite{BHS17} is required here. Such an analogue would both prove that $\varphi$ does indeed contain a $2l-2$ dimensional summand which corresponds to a simple supercuspidal representation (we have assumed this, as it is expected), and also determine the restriction of $\varphi_3$ to the wild inertia subgroup. Then by \cite{AL14}, the central character of $\Pi'$, the parameter $\delta$ obtained by Proposition~\ref{proposition:delta for p not 2}, and the restriction of $\varphi_3$ to the wild inertia subgroup, completely determine $\varphi_3$. Since we already described $\varphi_1$ and $\varphi_2$, this completely determines the Langlands parameter $\varphi$ of $\pi$.

Now consider the case $p = 2$. Then we expect to have a decomposition
\begin{align}\label{eq:Langlands parameter p = 2}
\varphi = \varphi_1 \oplus \varphi_2,
\end{align}
where $\varphi_1$ is $1$-dimensional, and $\varphi_2$ is of dimension $2l-1$ and corresponds to a simple supercuspidal representation $\Pi'$ of $\GL_{2l-1}$. In this case by Theorem \ref{mainmain}, there is a unique quadratic character $\tau$ of $F^*$ such that $\gamma(s, \pi\times\tau, \psi)$ a pole at $s=1$. Thus $\varphi_1=\tau$. The central character of $\Pi'$ is automatically trivial since $-1 \in I^+$ ($p = 2$). For the same reason $\pi(-I_{2l}) = 1$. Then the computation in the proof of Propoition~\ref{proposition:delta for p not 2} (but without $\gamma(s, \tau_2, \psi)$) implies
\begin{align*}
\delta=\chi(g_{\chi}),
\end{align*}
where $\delta$ is the coefficient of $q^{1/2-s}$ in $\gamma(s,\Pi',\psi)$. When $F=\Q_2$ this completely determines the Langlands parameter $\varphi$, because in the parameterization of simple supercuspidal representations of general linear groups, the uniformizer may be chosen modulo $1 + \mathfrak{p}$. For more general $2$-adic fields, we still need to find the restriction of $\varphi_2$ to the wild inertia subgroup, which as mentioned above is expected to be obtained from an analogue of \cite{BHS17}. Note that while the results of \textit{loc. cit.} were obtained under the assumption $p\ne2$, at least for the class of simple supercuspidal representations an extension of their results to $p=2$ seems possible.

\section{The normalization parameters of $\gamma(s,\pi\times\tau,\psi)$}\label{normalization parameters}

In this section we prove Proposition~\ref{proposition:C factor}, i.e., compute $C(s,\tau,\psi)$, and determine the normalization factor of
\eqref{gamma def}. We start with some preliminaries.

Let $\mathcal{S}(F^r)$ be the space of Schwartz--Bruhat functions on the row space $F^r$ and let $(e_1,\ldots,e_r)$ be the standard basis of $F^r$. Define the Fourier transform $\widehat{\Phi}\in\mathcal{S}(F^{r})$ by
\begin{align*}
\widehat{\Phi}(y)=\int_{F^{r}}\Phi(z)\psi(z({}^t{y}))dz
\end{align*}
(here $y$ and $z$ are rows).

We recall the definition of Tate's $\gamma$-factor $\gamma^{\mathrm{Tate}}(s,\eta,\psi)$, for a quasi-character $\eta$ of $F^*$
\cite{Tate}. For $\Phi\in\mathcal{S}(F)$, consider the zeta integral
\begin{align*}
Z(\Phi,s,\eta)=\int_{F^*}\Phi(x)\eta(x)|x|^s\, d^*x,
\end{align*}
which is absolutely convergent for $\Re(s)\gg 0$ and admits meromorphic continuation to a function in $q^{-s}$.
The $\gamma$-factor is then defined via the functional equation
\begin{align}\label{eq:GJ even orthogonal non split minimal cases fun equation}
\gamma^{\mathrm{Tate}}(s,\eta,\psi)Z(\Phi,s,\eta)=Z(\widehat{\Phi},1-s,\eta^{-1}).
\end{align}

We calculate $C(s,\tau,\psi)$ by choosing a special section in $V(\tau,s)$, for which we can succinctly compute
its image under $M(\tau,s)$, then compare both sides of \eqref{shahidigammafactor}. We argue by adapting parts of the arguments from  \cite[\S~6.1]{Kaplan2015}. To construct the section we use an isomorphism $\imath:Z_{\GL_2}\backslash\GL_2\rightarrow \SO_3$, where
$Z_{\GL_2}$ is the center of $\GL_2$.

To define $\imath$, it is useful to consider the complex Lie algebras, in order to identify the images of unipotent elements. Let
\begin{align*}
A=\left(\begin{smallmatrix}0&1\\0&0\end{smallmatrix}\right),\quad
B=\left(\begin{smallmatrix}0&0\\1&0\end{smallmatrix}\right),\quad
C=\left(\begin{smallmatrix}1&0\\0&-1\end{smallmatrix}\right),\quad
D=\left(\begin{smallmatrix}1&0\\0&1\end{smallmatrix}\right)
\end{align*}
be a basis for the Lie algebra $\mathfrak{gl}_2$ of $\GL_2$ over $\C$. The center $\mathfrak{z}_{\mathfrak{gl}_2}$ is spanned by $D$.
Then
\begin{align*}
[A,B]=C,\qquad [A,C]=-2A,\qquad [B,C]=2B.
\end{align*}
Also let
\begin{align*}
X=\left(\begin{smallmatrix}0&0&0\\1&0&0\\0&-\gamma/2&0\end{smallmatrix}\right),\quad
Y=\left(\begin{smallmatrix}0&\gamma/2&0\\0&0&-1\\0&0&0\end{smallmatrix}\right),\quad
Z=\left(\begin{smallmatrix}1&0&0\\0&0&0\\0&0&-1\end{smallmatrix}\right)
\end{align*}
be a basis for the Lie algebra $\mathfrak{so}_3$ of $\SO_3$ over $\C$,
\begin{align*}
[X,Y]=-\tfrac\gamma2Z,\qquad [X,Z]=X,\qquad [Y,Z]=-Y.
\end{align*}
Hence the following defines an isomorphism of Lie algebras
$d\imath_0:\mathfrak{z}_{\mathfrak{gl}_2}\backslash\mathfrak{gl}_2\rightarrow\mathfrak{so}_3$:
\begin{align*}
d\imath_0(A)=X, \quad d\imath_0(B)=\tfrac4\gamma Y, \quad d\imath_0(C)=-2Z, \quad d\imath_0(D)=0.
\end{align*}
In particular
\begin{align*}
&\imath_0\left(\begin{smallmatrix}1&u\\&1\end{smallmatrix}\right)=
\left(\begin{smallmatrix}1&&\\u&1&\\-\tfrac\gamma4u^2&-\tfrac\gamma2u&1\end{smallmatrix}\right),\qquad
\imath_0\left(\begin{smallmatrix}1&\\u&1\end{smallmatrix}\right)=\left(\begin{smallmatrix}1&2u&-\tfrac4\gamma u^2\\&1&-\tfrac4\gamma u\\&&1\end{smallmatrix}\right).
\end{align*}
It follows that $\imath$ is determined by
\begin{align*}
&\imath\left(\begin{smallmatrix}a&\\&b\end{smallmatrix}\right)=\left(\begin{smallmatrix}a^{-1}b&&\\&1&\\&&ab^{-1}\end{smallmatrix}\right),\qquad
\imath\left(\begin{smallmatrix}1&\\u&1\end{smallmatrix}\right)=\left(\begin{smallmatrix}1&2u&-\tfrac4\gamma u^2\\&1&-\tfrac4\gamma u\\&&1\end{smallmatrix}\right),\qquad
\imath\left(\begin{smallmatrix}&\tfrac4\gamma\\1&\end{smallmatrix}\right)=\left(\begin{smallmatrix}&&1\\&-1&\\1&&\end{smallmatrix}\right)=w_1.
\end{align*}
If $h\in \SO_{3}$, let $\imath^{-1}(h)$ be an arbitrary pre-image of $h$ in $\GL_2$, under $\imath$.

\begin{proof}[Proof of Proposition~\ref{proposition:C factor}]
For $\Phi\in\mathcal{S}(F^2)$, define $f_{\Phi,\tau,s}\in V(\tau,s)$ by
\begin{align*}
f_{\Phi,\tau,s}(h,a)=\int_{Z_{\GL_2}}\Phi(e_1z\imath^{-1}(g))\tau(\det{z\imath^{-1}(g)})|\det{z\imath^{-1}(g)}|^s\,dz.
\end{align*}

Recall that $M(\tau,s)f_{\Phi,\tau,s}(h,a)=\int_{U_{\SO_3}}f_{\Phi,\tau,s}(w_1uh,-a^{-1})du$. Since
\begin{align*}
w_1u=
\left(\begin{smallmatrix}&&1\\&-1&\\1&&\end{smallmatrix}\right)
\left(\begin{smallmatrix}1&u&-\tfrac1\gamma u^2\\&1&-\tfrac2\gamma u\\&&1\end{smallmatrix}\right)
=\imath(\left(\begin{smallmatrix}&\tfrac4\gamma\\1&\end{smallmatrix}\right)
\left(\begin{smallmatrix}1&\\\tfrac12u&1\end{smallmatrix}\right))
=\iota\left(\begin{smallmatrix}\tfrac2\gamma u&\tfrac4\gamma\\1&0\end{smallmatrix}\right),
\end{align*}
\begin{align*}
M(\tau,s)f_{\Phi,\tau,s}(I_3,1)=&\tau(\tfrac4\gamma)|\tfrac4\gamma|^s\int_F\int_{F^*}\Phi(\tfrac2\gamma zu, \tfrac4\gamma z)\tau^2(z)|z|^{2s}d^*zdu.
\end{align*}
Changing variables $z\mapsto\tfrac\gamma4z$ and $u\mapsto 2z^{-1}u$, we have
\begin{align*}
M(\tau,s)f_{\Phi,\tau,s}(I_3,1)=\tau(\tfrac\gamma4)|\tfrac\gamma4|^s|2|Z(\Phi_1,2s-1,\tau^2),\qquad \Phi_1(z)=\int_F\Phi(u,z)du.
\end{align*}
This formal step is justified for $\Re(s)\gg0$ by Fubini's Theorem.
According to \eqref{eq:GJ even orthogonal non split minimal cases fun equation}, when we multiply $Z(\Phi_1,2s-1,\tau^2)$ by $\gamma^{\mathrm{Tate}}(2s-1,\tau^2,\psi)$ we get $Z(\widehat{\Phi_1},2-2s,\tau^{-2})$ (as meromorphic continuations) and because
\begin{align*}
\widehat{\Phi_1}(z)=\int_F\int_F\Phi(u,y)\psi(yz)\,du\,dy=\widehat{\Phi}(0,z),
\end{align*}
we have
\begin{align}\label{eq:minimal cases even orthogonal non split first formula for intertwining at identity}
&\gamma^{\mathrm{Tate}}(2s-1,\tau^2,\psi)M(\tau,s)f_{\Phi,\tau,s}(h,1)
\\&=\tau(\tfrac\gamma4)|\tfrac\gamma4|^s|2|\int_{Z_{\GL_2}}(\imath^{-1}(h)\cdot \Phi)^{\widehat{}}(e_2z)\tau^{-1}(\det z)\tau(\det\imath^{-1}(h))|\det{z}|^{1-s}|\det{\imath^{-1}(h)}|^{s}\,dz.\nonumber
\end{align}

Now we compute $C(s,\tau,\psi)$ by substituting $f_{\Phi,\tau,s}$ for $f_s$ in \eqref{shahidigammafactor}, which becomes
\begin{align}\label{eq:Shahidi func equation even orthogonal n=1 with substitution}
&\int_{U_{\SO_3}} f_{\Phi,\tau,s}(w_1u,1) \psi^{-1}(u_{1,2}) \,du
=C(s,\tau,\psi)\int_{U_{\SO_3}} M(\tau,s)f_{\Phi,\tau,s}(w_1u,1) \psi^{-1}(u_{1,2}) \,du.
\end{align}
Changing variables as above, but now also paying attention to $\psi^{-1}$, the left hand side is
\begin{align}\label{eq:minimal cases even orthogonal non split Shahidi functional equation left hand side}
\tau(-1)\tau(\tfrac4\gamma)|\tfrac4\gamma|^{s}|2|\int_{F^*}\left(\int_F\Phi(u,z)\psi^{-1}(2z^{-1}u)\,du\right)\tau^2(z)|z|^{2s-1}\,d^*z.
\end{align}
For the right hand side, we use \eqref{eq:minimal cases even orthogonal non split first formula for intertwining at identity}
with $h=w_1u$ and obtain
\begin{align*}
&\gamma^{\mathrm{Tate}}(2s-1,\tau^2,\psi)^{-1}\tau(-1)|2|\int_F\int_{Z_{\GL_2}}(\imath^{-1}(w_1u)
\cdot \Phi)^{\widehat{}}(e_2z)\tau^{-1}(\det z)|\det{z}|^{1-s}\,dz\,\psi^{-1}(u)\,du.
\end{align*}
Using $(g\cdot \Phi)^{\widehat{}}(x,y)=|\det g|^{-1}\widehat{\Phi}((x,y)({}^tg^{-1}))$ and changing $u\mapsto2z^{-1}u$, this equals
\begin{align}\label{eq:minimal cases even orthogonal non split helper 2}
&\gamma^{\mathrm{Tate}}(2s-1,\tau^2,\psi)^{-1}
\tau(-1)|\gamma|\int_{F^*}\left(\int_F
\widehat{\Phi}(z,u)\psi^{-1}(-2z^{-1}u)\,du\right)\tau^{-2}(z)|z|^{1-2s}\,d^*z.
\end{align}
Observe that for a fixed $z$, by partial Fourier inversion,
\begin{align*}
\int_F
\widehat{\Phi}(z,u)\psi(2z^{-1}u)\,du
&=\int_F\int_F\Phi(x,y)\psi(xz)\left(\int_F\psi((y+2z^{-1})u)\,du\right)\,dx\,dy\\&
=\int_F\Phi(x,-2z^{-1})\psi(xz)\,dx.
\end{align*}
Hence \eqref{eq:minimal cases even orthogonal non split helper 2} becomes
\begin{align}\label{eq:minimal cases even orthogonal non split Shahidi functional equation right hand side}
&\gamma^{\mathrm{Tate}}(2s-1,\tau^2,\psi)^{-1}
\tau^{-2}(2)|2|^{1-2s}\tau(-1)|\gamma|\int_{F^*}\left(\int_F\Phi(u,z)\psi^{-1}(2z^{-1}u)\,du\right)\tau^{2}(z)|z|^{2s-1}d^*z.
\end{align}
Dividing \eqref{eq:minimal cases even orthogonal non split Shahidi functional equation left hand side} by \eqref{eq:minimal cases even orthogonal non split Shahidi functional equation right hand side}, we conclude
\begin{align*}
C(s,\tau,\psi)&=\tau^4(2)|2|^{4s}\tau^{-1}(\gamma)|\gamma|^{-s-1}\gamma^{\mathrm{Tate}}(2s-1,\tau^2,\psi).
\end{align*}
This completes the proof of the proposition.
\end{proof}

To find the normalization factor appearing in \eqref{gamma def} we must follow the computations from \cite{Kaplan2013a,Kaplan2015}.
This factor is extracted from the multiplicativity properties \cite[(6.1), (6.2)]{Kaplan2015} and from the minimal case of $\SO_2\times\GL_1$, but since here we only consider split $\SO_{2l}$, the multiplicativity properties are sufficient.

Let $Q_r=M_r\ltimes U_r$ be the standard maximal parabolic subgroup of $\SO_{2l}$ whose Levi part $M_r=\GL_r\times\SO_{2(l-r)}$ if $r<l$, and $\{\diag(b,b^*):b\in\GL_l\}$ for $r=l$.
Let $P_{(n_1,n_2)}$ be a parabolic subgroup of $\GL_{n}$, $n=n_1+n_2$, containing the subgroup of upper triangular invertible matrices, whose Levi part is isomorphic to $\GL_{n_1}\times\GL_{n_2}$. We could in theory work with
$n=1$, but since the multiplicativity properties for the case $\pi=\Ind_{Q_r}^{\SO_{2l}}(\sigma\otimes\pi')$ with $r<l$ and $l>n$ were
obtained using the case $r<l<n$ and the multiplicativity for $\tau=\Ind_{P_{(n_1,n_2)}}^{\GL_{n}}(\tau_1\otimes\tau_2)$,
we actually need to consider the general $\SO_{2l}\times\GL_n$ construction. In this case for $l\leq n$, $\SO_{2l}$ is embedded in $\SO_{2n+1}$ which is defined with respect to $J_{2n+1}$, exactly as in \cite{Kaplan2015}; but for $l>n$, $\SO_{2n+1}$ is now defined using $J_{2n+1,\gamma}$.

The functional equation for all $l$ and $n$ takes following form.
Define the factor $c(s,l,\tau,\gamma)=\tau^{-2}(\gamma)|\gamma|^{n(-2s+1)}$ if $l>n$, otherwise $c(s,l,\tau,\gamma)=1$ (as in \cite{Kaplan2013a,Kaplan2015}).
Then we claim
\begin{align}\label{gamma def general n}
\gamma(s,\pi\times\tau,\psi)\Psi(W, f_s)=\pi(-I_{2l})^n\tau(-1)^{l}\left(\tau^2(2)|2|^{n(2s-1)}c(s,l,\tau,\gamma)\right)\Psi^*(W, f_s).
\end{align}
Here $\Psi(W, f_s)$ and $\Psi^*(W, f_s)$ are the $\SO_{2l}\times\GL_n$ integrals, described in \S~\ref{notation} for $n=1$ and in
\cite{Kaplan2015} for all $n$. Specializing \eqref{gamma def general n} to $n=1$, we obtain \eqref{gamma def}.
\begin{rmk}
The factor $\tau^2(2)|2|^{n(2s-1)}$ in \eqref{gamma def general n} is different from the corresponding one in \cite[p.~408]{Kaplan2015} ($|2\gamma|^{n(s-1/2)}\tau(2\gamma)$) because the embedding is different, see \S~\ref{notation}.
\end{rmk}

Inspecting \cite{Kaplan2013a,Kaplan2015}, the only multiplicativity property for $\gamma(s,\pi\times\tau,\psi)$ which is affected by the difference in the definition of $\SO_{2n+1}$ and choice of embedding (the vector $e$, see \S~\ref{notation}) here is the one for $r=l>n$, which was proved in \cite[\S~5.4]{Kaplan2013a}. This property is replaced by the following result, which implies \eqref{gamma def general n} (see \cite[\S~6]{Kaplan2015}).
\begin{prop}

Assume $\pi=\Ind_{\overline{Q}_l}^{\SO_{2l}}(\sigma)$,
where $\overline{Q}_l=M_l\ltimes\overline{U}_l$, and $\tau$ is an irreducible generic representation of $\GL_n$. Then
\begin{align}\label{eq:multiplicativity helper results first var even orthogonal}
\frac{\Psi^*(W, f_s)}{\Psi(W, f_s)}=\sigma(-I_{2l})^n\tau(-1)^l\tau^{-2}(2)|2|^{-2s+1}c(s,l,\tau,\gamma)^{-1}
\gamma(s,\sigma\times\tau,\psi)\gamma(s,\sigma^*\times\tau,\psi).
\end{align}
Here $\sigma^*$ is the representation on the space of $\sigma$ acting by $\sigma^*(b)=\sigma(b^*)$, and
the $\gamma$-factors are the Rankin--Selerg $\GL_l\times \GL_n$ $\gamma$-factors of \cite{JPSS}.
\end{prop}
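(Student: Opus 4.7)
The plan is to follow the proof of the analogous multiplicativity identity in \cite[\S~5.4]{Kaplan2013a}, re-running the argument with the present embedding of $\SO_{2n+1}$ (using $J_{2n+1,\gamma}$) and the present choice of the vector $e$, and keeping careful track of the scalar factors that change as a consequence.

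First I would realize the induced representation $\pi=\Ind_{\overline{Q}_l}^{\SO_{2l}}(\sigma)$ concretely: choose a Whittaker function $W$ in the image of a Whittaker functional on the induced model so that $W$ is supported on the open cell $\overline{Q}_l w_0 U_{\SO_{2l}}$ for a suitable Weyl element $w_0$, and so that the restriction of $W$ to the Siegel unipotent radical $\overline{U}_l$ is essentially a Whittaker function $W^\sigma$ for $\sigma$ on $\GL_l$. Substituting into $\Psi(W,f_s)$ and interchanging the order of integration (justified in the domain of absolute convergence, then by meromorphic continuation), one rewrites the integral as an inner integral of the form of the Rankin--Selberg $\GL_l\times\GL_n$ integral of Jacquet--Piatetski-Shapiro--Shalika against $W^\sigma$, followed by an outer integration that produces the second factor. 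This is the standard two-step unfolding; the only new feature is that the change of variables matching the $\SO_{2n+1}$-coordinates to $\GL_n$-coordinates now goes through $J_{2n+1,\gamma}$ rather than $J_{2n+1}$, so the Jacobian and the character $\psi$ appearing in the Whittaker data for $\GL_n$ pick up factors involving $\gamma$ and $2$.

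Next I would apply the intertwining operator $M(\tau,s)$ to $f_s$ and redo the same unfolding for $\Psi^*(W,f_s)$, but now the open cell is conjugated by an element that swaps the two factors of the Levi of $P_{(n,n)}\subset\GL_{2n}$ (or the analogue). This rewrites $\Psi^*(W,f_s)$ as a product of the same two $\GL_l\times\GL_n$ integrals but with $\sigma$ replaced by $\sigma^*$, up to an intertwining on the $\tau$-side. Applying the $\GL_l\times\GL_n$ functional equation of \cite{JPSS} once for $\sigma\times\tau$ and once for $\sigma^*\times\tau$ then produces the two gamma factors on the right hand side of \eqref{eq:multiplicativity helper results first var even orthogonal}, while the sign $\sigma(-I_{2l})^n\tau(-1)^l$ comes from the central elements that arise when moving the Weyl element past the inducing data. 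Everything up to this point is parallel to \cite[\S~5.4]{Kaplan2013a}.

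The main obstacle, and the only step where this proposition differs from \emph{loc. cit.}, is the bookkeeping of the normalization factor $\tau^{-2}(2)|2|^{-2s+1}c(s,l,\tau,\gamma)^{-1}$. In \cite{Kaplan2013a} the analogous factor was $|2\gamma|^{n(s-1/2)}\tau(2\gamma)$ times $c(s,l,\tau,\gamma)^{-1}$; the discrepancy comes from the conjugating matrix between the old embedding of $\SO_3$ (with the unit-vector choice of $e$ requiring $2\gamma$ a square) and the present one, which uses $e=\tfrac14 e_l+\gamma e_{l+1}$ and $J_{3,\gamma}$. I would compute this conjugation explicitly, read off how the Siegel-Levi coordinates and the additive character $\psi$ on $U_{\SO_{2l}}$ are rescaled, and check that the factors $|\gamma|$ and $\tau(\gamma)$ reassemble exactly into $c(s,l,\tau,\gamma)^{-1}$ while the remaining discrepancy is $\tau^{-2}(2)|2|^{-2s+1}$, consistent with Proposition~\ref{proposition:C factor} (where the same $\tau^4(2)|2|^{4s}\tau^{-1}(\gamma)|\gamma|^{-s-1}$ appeared in $C(s,\tau,\psi)$). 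Once this matches, \eqref{eq:multiplicativity helper results first var even orthogonal} follows and, combined with the other (unchanged) multiplicativity properties of \cite{Kaplan2015}, yields \eqref{gamma def general n}.
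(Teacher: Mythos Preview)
Your plan is essentially the same as the paper's: follow \cite[\S~5.4]{Kaplan2013a} verbatim and isolate the scalar discrepancy caused by the new embedding. Where you differ from the paper is in how you localize that discrepancy. You propose to compare the two conjugating matrices for the embedding of $\SO_3$, read off how the Siegel--Levi coordinates and the character on $U_{\SO_{2l}}$ are rescaled, and assemble the factors from that. The paper is more surgical: it observes that in the entire argument of \cite[\S~5.4]{Kaplan2013a} the \emph{only} place anything changes is the analogue of \cite[Claim~5.6]{Kaplan2013a}, namely the $U_{\SO_3}$-equivariance of the auxiliary function $F(h)=\int_{V_l''}\varphi_\zeta(v''w^{l,1}h,w')\psi_\gamma(v'')\,dv''$. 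With the present embedding one finds $F(uh)=\psi^{-1}(u_{1,2})F(h)$ rather than $\psi^{-1}(\tfrac2\beta u_{1,2})$. After the change of variables $x\mapsto\tfrac2\gamma x$ in the $du$-integral, the constant $c_{\tau,\beta}=|\tfrac2\gamma|^{2s-1}\tau^2(\tfrac2\gamma)=\tau^2(2)|2|^{2s-1}c(s,l,\tau,\gamma)$ drops out not from a Jacobian in the unfolding but from substituting $y\cdot f_s$, with $y=\diag(\tfrac2\gamma,1,\tfrac\gamma2)w_1^{-1}h$, into the Shahidi functional equation \eqref{shahidigammafactor}. Everything else, including the two applications of the $\GL_l\times\GL_n$ functional equation of \cite{JPSS} and the sign $\sigma(-I_{2l})^n\tau(-1)^l$, is literally unchanged from \cite{Kaplan2013a}. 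Your outline would get there, but it is worth knowing that the bookkeeping collapses to this single equivariance computation rather than a global comparison of coordinate systems.
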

\begin{proof}
Closely inspecting the proof in \cite[\S~5.4]{Kaplan2013a} (of \cite[(5.5)]{Kaplan2013a}, and see also
the top of p.~419 of \cite{Kaplan2015}, there $\beta^2=2\gamma$), we see that the only change is to \cite[Claim~5.6]{Kaplan2013a}
(this claim appeared as Claim~7.13 in \cite{Kaplan2015a} where it was proved in detail, but we reproduce the argument below), and we can observe the difference already when $n=1$. Thus we argue for $n=1$ (the extension to $n>1$ is straightforward).
We introduce the necessary notation from \cite[\S~5.4]{Kaplan2013a}.
Consider the subgroup $V_l''$ of $U_l$ defined by
\begin{align*}
V_l''=\left\{\begin{pmatrix}
1&0&0&0&v_4&0\\
&I_{l-2}&0&v_3&v_5&v_4'\\
&&1&0&v_3'&0\\
&&&1&0&0\\
&&&&I_{l-2}&0\\
&&&&&1\\
\end{pmatrix}\in U_l\right\}.
\end{align*}
Put $w'=\left(\begin{smallmatrix}&I_{l-1}\\1&\end{smallmatrix}\right)
\left(\begin{smallmatrix}I_{l-1}\\&4\end{smallmatrix}\right)$. Let $\varphi_{\zeta}$ belong to the space of $\Ind_{\overline{Q}_l}^{\SO_{2l}}(|\det|^{-\zeta}\sigma)$, where $\zeta$ is an auxiliary complex parameter ($\Re(\zeta)\gg0$) and $\sigma$ is realized in its Whittaker model with respect to the subgroup of upper triangular unipotent matrices in $\GL_l$ and character $z\mapsto \psi^{-1}(\sum_{i=1}^{l-1}z_{i,i+1})$.
Consider the function
\begin{align*}
F(h)=\int_{V_{l}''}\varphi_{\zeta}(v''w^{l,1}h,w')\psi_{\gamma}(v'')\,dv'',\qquad h\in\SO_3.
\end{align*}
We show $F(uh)=\psi^{-1}(u_{1,2})F(h)$, as opposed to \cite[Claim~5.6]{Kaplan2013a} where the claim was
$F(uh)=\psi^{-1}(\tfrac2\beta u_{1,2})F(h)$ under the assumption $2\gamma=\beta^2$ (see Remark~\ref{rem beta gamma} below).
For
\begin{align*}
u=\begin{pmatrix}
    1 &  &  \\
    x & 1 &  \\
    -\tfrac\gamma4x^2 & -\tfrac\gamma2x& 1
  \end{pmatrix},
\quad {}^{(w^{l,1})^{-1}}u=w^{l,1}u(w^{l,1})^{-1}=
\begin{pmatrix}
1 &  & & &\\
 & I_{l-2} & & &\\
\tfrac14x & & 1& & \\
\gamma x & & & 1 & \\
& & &  & I_{l-2} \\
-\tfrac\gamma4x^2 & & -\gamma x &  -\tfrac14x &  & 1
\end{pmatrix}.
\end{align*}
Then for $v''\in V_l''$, $({}^{(w^{l,n})^{-1}}u)^{-1}v''({}^{(w^{l,n})^{-1}}u)=b_uv_u$
where $b_u$ is the image in $M_{l}$ of
\begin{align*}
\left(\begin{array}{ccc} 1&&\\
\gamma x v_3-\tfrac\gamma4x^2v_4'&I_{l-2}&-\gamma x
v_4'\\&&1\\\end{array}\right)\in\GL_{l}
\end{align*}
and
\begin{align*}
v_u=\left(\begin{array}{cccccc}
I_{n}&0&0&0&v_4&0\\
&I_{l-n-1}&0&v_3-\tfrac14xv_4'&v_5+\ldots&v_4'\\
&&1&0&v_3'-\tfrac14xv_4&0\\
&&&1&0&0\\
&&&&I_{l-n-1}&0\\
&&&&&I_{n}\\
\end{array}\right)\in V_{l}''.
\end{align*}
It follows that
\begin{align*}
F(uh)=\int_{V_{l}''}\varphi_{\zeta}(({}^{(w^{l,n})^{-1}}u)b_uv_uw^{l,n}h,
w')\psi_{\gamma}(v'')\,dv''.
\end{align*}
Now on the one hand, changing variables in $v_u$ removes the
dependence on $u$ and changes $\psi_{\gamma}(v'')=\psi(-\gamma
(v_3)_{l-2})$ to
$\psi_{\gamma}(v'')\psi(-\tfrac\gamma4 x(v_4')_{l-2})$.
On the other hand, for any $h\in\SO_3$,
\begin{align*}
\varphi_{\zeta}(({}^{(w^{l,n})^{-1}}u)b_uh,
w')
=\psi^{-1}(-\tfrac\gamma4 x(v_4')_{l-2}+x)\varphi_{\zeta}(h,
w').
\end{align*}
We conclude $F(uh)=\psi^{-1}(x)F(h)=\psi^{-1}(u_{1,2})F(h)$ (cf. \cite[Claim~5.6]{Kaplan2013a}).
Plugging this result into \cite[\S~5.4, p.~340]{Kaplan2013a},
rewriting the $du$-integration over $U_{\SO_3}$ and changing variables $x\mapsto \tfrac2\gamma x$ we obtain the analogue of
\textit{loc. cit.} (5.22):
\begin{align}\label{int:1var k=l>n+1 before applying Shahidi local coefficient}
&|\tfrac2\gamma|\int_{\overline{U}_{\SO_3}\backslash \SO_3}F(h)
\left(\int_{U_{\SO_3}}
f_s(w_1uw_1^{-1}h,1)\psi^{-1}(\tfrac2\gamma u_{1,2})\,du\right)\,dh.
\end{align}
Now applying
\eqref{shahidigammafactor} we obtain the analogue of \textit{loc. cit.} (5.23): as meromorphic continuations
\eqref{int:1var k=l>n+1 before applying Shahidi local
coefficient} equals
\begin{align}\label{int:1var k=l>n+1 after applying Shahidi local coefficient}
&c_{\tau,\beta}|\tfrac2\gamma|\int_{\overline{U}_{\SO_3} \backslash \SO_3}F(h)
\left(\int_{U_{\SO_3}}
M^*(\tau,s)f_s(w_1u
w_1^{-1}h,1)\psi^{-1}(\tfrac2\gamma u_{1,2})\,du\right)\,dh.
\end{align}
Here
$c_{\tau,\beta}=|\tfrac2\gamma|^{2s-1}\tau^2(\tfrac2\gamma)=\tau^2(2)|2|^{2s-1}c(s,l,1,\gamma)$
is calculated by substituting $y\cdot f_s$ for $f_s$ in
\eqref{shahidigammafactor} where $y=\diag(\tfrac2\gamma,1,\tfrac\gamma2)w_1^{-1}h$.
Note that the extra factor $|\tfrac2\gamma|$ in \eqref{int:1var k=l>n+1 after applying Shahidi local coefficient} will be canceled when we proceed as in \cite[\S~5.4]{Kaplan2013a} and rewrite the $du$-integration over $\overline{U}_{\SO_3}$ again. Identity~\eqref{eq:multiplicativity helper results first var even orthogonal} now follows as in
\textit{loc. cit.}
\end{proof}
\begin{rmk}\label{rem beta gamma}
Even if $2\gamma=\beta^2$ here as well, for some $\beta$, $\SO_3$ is still defined differently, so one can not expect to reproduce the formula of \textit{loc. cit.} here unless $\gamma=2$, then $J_{3,\gamma}=J_3$, and if $\beta=2$ the embedding matches with our embedding. Then indeed $\tfrac2\beta=1$.
\end{rmk}

\def\cprime{$'$} \def\cprime{$'$} \def\cprime{$'$}

\end{document}